\definecolor{mygreen}{RGB}{28,172,0} 
\definecolor{mylilas}{RGB}{170,55,241}
\newtheorem{theorem}{Theorem}[section]
\newtheorem{conjecture}{Conjecture}
\newtheorem{corollary}{Corollary}[theorem]
\newtheorem{lemma}[theorem]{Lemma}
\newtheorem{proposition}[theorem]{Proposition}
\newtheorem{definition}[theorem]{Definition}
\newtheorem{remark}[theorem]{Remark}
\newcommand{\R}{\mathbb R}
\newcommand{\eps}{\varepsilon}
\begin{document}
\title{The diagram $(\lambda_1,\mu_1)$}
\author[]{Ilias Ftouhi, Antoine Henrot}

\address[Ilias Ftouhi]{Friedrich-Alexander-Universität  Erlangen-Nürnberg, Department of Mathematics, Chair in Applied Analysis – Alexander von Humboldt Professorship, Cauerstr. 11, 91058 Erlangen, Germany.}
\email{ilias.ftouhi@fau.de}
\address[Antoine Henrot]{Universit\'e de Lorraine, CNRS, IECL, F-54000 Nancy, France}
\email{antoine.henrot@univ-lorraine.fr}

\lstset{language=Matlab,%
    breaklines=true,%
    morekeywords={matlab2tikz},
    keywordstyle=\color{blue},%
    morekeywords=[2]{1}, keywordstyle=[2]{\color{black}},
    identifierstyle=\color{black},%
    stringstyle=\color{mylilas},
    commentstyle=\color{mygreen},%
    showstringspaces=false,
    numbers=left,%
    numberstyle={\tiny \color{black}},
    numbersep=9pt, 
    emph=[1]{for,end,break},emphstyle=[1]\color{red}, 
}

\date{September 30, 2021}

\begin{abstract}
 In this paper we are interested in the possible values taken by the pair $(\lambda_1(\Omega), \mu_1(\Omega))$ the first eigenvalues of the Laplace
 operator with Dirichlet and Neumann boundary conditions respectively of a bounded plane domain $\Omega$. We prove that, without any particular
 assumption on the class of open sets $\Omega$, the two classical inequalities (the Faber-Krahn inequality and the Weinberger inequality) provide
 a complete system of inequalities. Then we consider the case of convex plane domains for which we give new inequalities for the product $\lambda_1 \mu_1$.
 We plot the so-called Blaschke--Santal\'o diagram and give some conjectures.
\end{abstract}

\keywords{complete systems of inequalities, Blaschke--Santal\'o diagrams, convex sets, sharp spectral inequalities.}


\maketitle

{\it Dedicated to Marius Tucsnak for his 60th birthday}

\section{Introduction}

Let $\Omega\subset \mathbb{R}^2$ be an open and bounded set in the plane and let us denote by $\lambda_1(\Omega)$ its first eigenvalue
for the Laplacian-Dirichlet (see below for the precise definition) and $\mu_1(\Omega)$ its first non-trivial eigenvalue for the Laplacian-Neumann.
Does there exist a domain $\Omega$ of area, say $\pi$ satisfying $\lambda_1(\Omega)=20$ and $\mu_1(\Omega)=3$ ?
Among them, is there a convex domain?
For that kind of question, it is very convenient to plot the so-called {\it Blaschke--Santal\'o diagrams} defined by
\begin{equation}\label{diagE}
\mathcal{E}=\{(x,y) \mbox{ where } x=|\Omega| \lambda_1(\Omega),\; y=|\Omega| \mu_1(\Omega),\; \Omega\in \mathcal{O}\},
\end{equation}
and
\begin{equation}\label{diagEC}
\mathcal{E}^C=\{(x,y) \mbox{ where } x=|\Omega| \lambda_1(\Omega),\; y=|\Omega| \mu_1(\Omega),\; \Omega\in \mathcal{K}\}
\end{equation}
where $\mathcal{O}$ is the class of open bounded subsets of Lipschitz boundary of $\mathbb{R}^2$ and $\mathcal{K}$ is the class of open bounded convex sets of $\mathbb{R}^2$ with non-empty interior. These diagrams describe all the possible values of the couple ($\lambda_1,\mu_1$). This kind of diagram has been s{introduced} by W. Blaschke
in convex geometry and intensively studied by L. Santal\'o for quantities like the area, the perimeter, the diameter, the inradius...
For spectral quantities, this kind of work is more recent, let us mention for example some
diagrams that have been recently established for quantities like $(\lambda_1(\Omega),\lambda_2(\Omega))$ (the Dirichlet
eigenvalues) in \cite{AH11}, \cite{BBFi}, $(\mu_1(\Omega),\mu_2(\Omega))$ (the Neumann eigenvalues) in \cite{AH11},  
$(\lambda_1(\Omega),T(\Omega))$ {(where $T(\Omega)$ is the torsion of $\Omega$ \footnote{The torsion function of the set $\Omega$ can be defined by $T(\Omega):=\sup\limits_{w\in H^1_0(\Omega)} \frac{\left(\int_\Omega w d x\right)^2}{\int_\Omega |\nabla w|^2 d x}$.})} in 
\cite{BBP19}, \cite{MR4251321} or \cite{LZ19},  $(P(\Omega),\lambda_1(\Omega))$ (here
$P(\Omega)$ is the perimeter) in \cite{FL21}.

Let us now fix the notations:
the Laplace-Dirichlet problem on $\Omega$ consists in solving the eigenvalue problem 
\begin{equation*}
\begin{cases}
    - \Delta u&=\ \ \lambda u \qquad\mbox{in }  \Omega   \\
      \ \ \ \ \ u &=\ \ 0 \ \ \qquad\mbox{on } \partial\Omega.
\end{cases}
\end{equation*}
For $\Omega$ {open and} bounded,  the spectrum  is discrete and the sequence of eigenvalues (counted with their multiplicities) go to infinity
$$0< \lambda_1(\Omega)\le  \lambda_2 (\Omega) \le \cdots \rightarrow +\infty.$$

The Laplace-Neumann eigenvalue problem on $\Omega$ consists in solving the eigenvalue problem 
\begin{equation*}
\begin{cases}
     -\Delta u&=\ \ \mu u\qquad\mbox{in }   \Omega \\
     \ \  \partial_{\nu} u&=\ \ 0 \ \ \qquad\mbox{on }   \partial\Omega.
\end{cases}
\end{equation*}
where $\nu$ stands for the outward unit normal at the boundary.
We assume here $\Omega$ to be a Lipschitz bounded open set. Since the Sobolev embedding $H^1(\Omega) \rightarrow L^2(\Omega)$ is compact in that case, 
the spectrum of the Neumann problem is discrete and the eigenvalues (counted with their multiplicities) go to infinity. The first eigenvalue is zero, associated to constant functions.
$$0= \mu_0(\Omega) \leq   \mu_1(\Omega)\le  \mu_2 (\Omega) \le \cdots \rightarrow +\infty.$$
We denote by $\mu_1(\Omega)$ the next or first non-trivial eigenvalue. Note that $\mu_1(\Omega)=0$ if and only if $\Omega$ is disconnected.
We recall the variational characterization of the Neumann eigenvalues, that for the first non zero eigenvalue reads as follows
\begin{equation}\label{eVFN}
\mu_1(\Omega)=\min \Big \{  \frac{\int_{\Omega}|\nabla u|^2dx}{\int_{\Omega}u^2 dx}:u\in H^1(\Omega),\int_{\Omega}udx=0 \Big \},
\end{equation}
and the minimum is attained at the eigenfunctions associated to $\mu_1(\Omega)$.

\medskip\noindent

In Section \ref{secgen}, we describe the diagram $\mathcal{E}$ defined in \eqref{diagE}. It turns out that it is (almost) completely characterized by the two classical inequalities:
the Faber-Krahn inequality, see \cite{Faber1923}, \cite{Krahn}:
\begin{equation}\label{FKi}
\forall \Omega {\in\mathcal{O}} ,\quad |\Omega| \lambda_1(\Omega) \geq |B| \lambda_1(B)
\end{equation}
where $B$ is any ball
and the Weinberger inequality, see \cite{W56}, \cite{Sz54}
\begin{equation}\label{Wi}
\forall \Omega\ {\in\mathcal{O}},\quad |\Omega| \mu_1(\Omega) \leq |B| \mu_1(B),
\end{equation}
where we define $\mathcal{O}$ to be the class of open bounded subsets of Lipschitz boundary of $\mathbb{R}^2$.

When we say completely characterized, we mean that it is a complete set of inequalities: in other words, the closure of the diagram $\mathcal{E}$ coincides actually with the
strip $[|B| \lambda_1(B), +\infty) \times [0,|B| \mu_1(B)]$.
We will denote by $A= (|B| \lambda_1(B),|B| \mu_1(B))=(\pi j_{0,1}^2,\pi {j'}_{11}^2)$ the upper left vertex corresponding to the ball,
where $j_{0,1}\sim 2.405$ is the first zero of the Bessel function $J_0$ while $j'_{11}\sim 1.841$ is the first zero of the derivative of the Bessel function $J_1$,
see e.g. \cite{He06}.

\medskip\noindent
Then we study the diagram  $\mathcal{E^C}$ defined in \eqref{diagEC}. It is more complicated, but we give two explicit curves (hyperbola) that bound the diagram from
above and from below.
In particular, coming back to the question raised at the beginning of this Introduction, we can answer: 

\begin{itemize}
\item yes, it could exist a plane domain of area $\pi$ satisfying $\lambda_1(\Omega)=20$ and $\mu_1(\Omega)=3$. Indeed, the point $(20,3)$ belongs to
the strip $[|B| \lambda_1(B), +\infty) \times [0,|B| \mu_1(B)]$, but since we are only able to prove that this strip is the closure of the set $\mathcal{E}$, we cannot
claim that it corresponds to a set $\Omega$.
\item no, there does not exist a convex domain satisfying this.
\end{itemize}

\section{The diagram for the general case}\label{secgen}
\subsection{Convergence of eigenvalues}
It will be useful in the sequel to have simple criteria ensuring convergence of the Neumann and Dirichlet eigenvalues for a sequence of domains.
In the Dirichlet case, the situation is well understood and we can state for example the following theorem, see \cite{HPb} or \cite{He06}.
\begin{theorem}[Sverak]\label{thmsv}
Let $\Omega_n \subset \R^2$ be a sequence of bounded open sets converging for the Hausdorff metric to an open set $\Omega$. Assume that the
number of connected components of the complement {set} of $\Omega_n$ is uniformly bounded, then the Dirichlet eigenvalues converge:
$$\forall k,\quad \lambda_k(\Omega_n) \to \lambda_k(\Omega).$$
\end{theorem}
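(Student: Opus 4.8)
The plan is to reduce the statement to the \emph{$\gamma$-convergence} of the domains, i.e. to the Mosco convergence of the form domains $H^1_0(\Omega_n)$ towards $H^1_0(\Omega)$, viewed (after extension by zero) as closed subspaces of $H^1(D)$ for a fixed large ball $D$ containing all the $\Omega_n$ and $\Omega$; it is classical that $\gamma$-convergence forces the convergence of every Dirichlet eigenvalue (see \cite{HPb}, \cite{He06}). Thus the real content lies in two facts, the Mosco conditions: $\mathrm{(M1)}$ every $u\in H^1_0(\Omega)$ is the strong $H^1(D)$-limit of some $u_n\in H^1_0(\Omega_n)$; and $\mathrm{(M2)}$ every weak $H^1(D)$-limit of a sequence $u_n\in H^1_0(\Omega_n)$ lies in $H^1_0(\Omega)$.

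I would first dispatch $\mathrm{(M1)}$, the recovery sequence. Given $u\in H^1_0(\Omega)$, approximate it in $H^1$ by $\varphi\in C_c^\infty(\Omega)$. Since $\Omega_n\to\Omega$ for the Hausdorff metric, the sets $\bar D\setminus\Omega_n$ converge to $\bar D\setminus\Omega$, hence stay at positive distance from the compact set $\mathrm{supp}\,\varphi\subset\Omega$ for $n$ large, so $\varphi\in C_c^\infty(\Omega_n)\subset H^1_0(\Omega_n)$; a diagonal argument over such $\varphi$'s produces the required $u_n$. Note that this step uses only Hausdorff convergence and not the topological hypothesis — which is exactly as it should be, since $\mathrm{(M1)}$ is the "easy" half and the bound on the number of components is needed only for $\mathrm{(M2)}$.

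The heart of the matter, and the main obstacle, is $\mathrm{(M2)}$: if $u_n\in H^1_0(\Omega_n)$ and $u_n\rightharpoonup u$ weakly in $H^1(D)$, then the quasicontinuous representative $\tilde u$ vanishes quasi-everywhere on $D\setminus\Omega$. This is false for arbitrary Hausdorff-converging sequences (perforate a fixed set by many tiny holes filling a region), and it is precisely here that the bound $N$ on the number of components of $\Omega_n^c$ enters. I would write $\bar D\setminus\Omega_n=\bigcup_{i\le N}F_n^i$ as the union of its connected components; by Blaschke's selection theorem the continua of $\bar D$ form a compact space for the Hausdorff metric (a Hausdorff limit of connected sets is connected), so along a subsequence each $F_n^i$ converges to a continuum $F^i$ (possibly a single point). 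Since $F_n^i\subset\Omega_n^c$ and $\Omega_n^c\to\Omega^c$, one gets $F^i\subset\Omega^c$, and by uniqueness of the Hausdorff limit $\bar D\setminus\Omega=\bigcup_{i\le N}F^i$; hence it suffices to check that $\tilde u=0$ q.e.\ on each $F^i$. When $\mathrm{diam}\,F^i=0$ this is vacuous, a point having zero capacity. For the remaining indices, with $\mathrm{diam}\,F^i=d_i>0$, one invokes the planar stability lemma: if continua $F_n\to F$ in the Hausdorff metric with $\mathrm{diam}\,F>0$ and $v_n\rightharpoonup v$ in $H^1(D)$ with $v_n=0$ q.e.\ on $F_n$, then $v=0$ q.e.\ on $F$. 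The engine of this lemma is the quantitative lower bound for the capacity of a planar continuum — a connected set of diameter $d$ has capacity at least $c/\log(1/d)$ in a fixed ball, a bound degenerating only as $d\to0$ — obtained by slicing on circles centred at points of $F_n$ (connectedness forces $v_n$ to vanish somewhere on every small enough circle) together with a Poincaré inequality on circles integrated over a range of scales. This is genuinely two-dimensional: in $\R^N$ with $N\ge3$ a continuum may be capacity-negligible and the theorem fails. I expect making this last passage to the limit airtight — controlling where the Dirichlet energy of $v_n$ can concentrate near $F_n$ — to be the technically delicate point; the cleanest packaging is via the Dal Maso–Mosco theory of capacitary measures, identifying $\Omega_n$ with $\infty_{\Omega_n^c}$ and using the bounded-components hypothesis to obtain the $\gamma$-convergence of these measures to $\infty_{\Omega^c}$.

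Finally, granted Mosco convergence, the conclusion $\lambda_k(\Omega_n)\to\lambda_k(\Omega)$ for every $k$ follows from the min–max formulas: $\mathrm{(M1)}$ applied to the span of the first $k$ eigenfunctions of $\Omega$ gives $\limsup_n\lambda_k(\Omega_n)\le\lambda_k(\Omega)$, while $\mathrm{(M2)}$, applied to weak limits of normalized $\Omega_n$-eigenfunctions (bounded in $H^1(D)$ by the bound on the $\lambda_j(\Omega_n)$ just obtained, hence compact in $L^2(D)$ by Rellich), gives $\liminf_n\lambda_k(\Omega_n)\ge\lambda_k(\Omega)$; equivalently, this is the standard fact that $\gamma$-convergence implies norm-resolvent convergence of the Dirichlet Laplacians.
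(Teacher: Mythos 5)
The paper does not prove this theorem: it is quoted as a known result of \v{S}ver\'ak with references to \cite{HPb} and \cite{He06}, so there is no internal proof to compare against. Your outline is a faithful reproduction of the standard argument from those sources: reduction to Mosco ($\gamma$-) convergence of $H^1_0(\Omega_n)$ in a fixed box $D$, the easy recovery-sequence half $\mathrm{(M1)}$ from Hausdorff convergence alone (compact subsets of $\Omega$ are eventually contained in $\Omega_n$), and the hard half $\mathrm{(M2)}$ via the decomposition of $\bar D\setminus\Omega_n$ into at most $N$ continua, Blaschke selection, and the quantitative capacity lower bound for planar continua of positive diameter --- which is indeed where the two-dimensionality and the bound on the number of components are both used, and why the statement fails in $\R^N$, $N\ge 3$. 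You correctly identify the genuinely delicate step (the stability lemma: $v_n\rightharpoonup v$ in $H^1(D)$, $v_n=0$ q.e.\ on continua $F_n\to F$ with $\mathrm{diam}\,F>0$ implies $v=0$ q.e.\ on $F$) but leave it as a black box; as a result the proposal is a correct and well-organized roadmap rather than a self-contained proof, which is an appropriate level of detail for a result the paper itself only cites. One minor point worth making explicit: when passing to the limit in the decomposition $\bar D\setminus\Omega_n=\bigcup_{i\le N}F_n^i$ you should first extract a subsequence along which the number of components is constant (padding with singletons if necessary) before applying Blaschke selection componentwise; with that said, the identity $\bar D\setminus\Omega=\bigcup_i F^i$ follows as you claim.
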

In the Neumann case, we need to put much stronger assumption on the sequence of domains $\Omega_n$. A classical one in that context is {\it the uniform cone condition},
see \cite{Agb}, \cite{Che75}, \cite{HPb}:
\begin{definition}\label{deficone}
Let $y$ be a point in $\mathbb{R}^N$, $\xi$ a unit vector and
$\varepsilon$ a positive real number. Let $C(y,\xi,\varepsilon)$ be
the cone of vertex $y$ (without its vertex), of direction $\xi$ and dimension $\varepsilon$
defined by 
$$C(y,\xi,\varepsilon)=\{z\in\mathbb{R}^N,\ (z-y,\xi)\geq
\cos(\varepsilon)|z-y|\quad {\rm and}\quad 0<|z-y|<\varepsilon\}.$$
An open set $\Omega$ is said to have the uniform ($\varepsilon$)-cone property
if
$$\forall x\in\partial\Omega,\ \exists\xi_x\ {\rm
unit\;vector\;such\;that:}\quad\forall y\in \overline{\Omega}\cap
B(x,\varepsilon)\quad C(y,\xi_x,\varepsilon)\subset\Omega.$$
\end{definition}
Now we work with a sequence of sets $\Omega_n$ which all have the uniform ($\varepsilon$)-cone property {\bf for the same $\varepsilon$}.
An equivalent definition is to say that the domains $\Omega_n$ are all uniformly Lipschitz, with the same Lipschitz constant.
Let $D$ be a ball containing all the sets $\Omega_n$ (that can be seen as an assumption). It is well known that the cone property (or the Lipschitz regularity)
ensures an existence of an extension operator $P_n:H^1(\Omega_n) \to H^1(D)$
The important point proved by D. Chenais in \cite{Che75} is the following:
\begin{lemma}\label{lemuniproj}
If the sets $\Omega_n$ have the uniform ($\varepsilon$)-cone property (for the same $\varepsilon$), then there exists a constant $M$ such that
$\|P_n\|\leq M$.
\end{lemma}
We can deduce the following convergence theorem for the first Neumann eigenvalue:
\begin{theorem}\label{theneum}
Let $\Omega_n\subset D$ be a sequence of open sets having the uniform $\varepsilon$-cone property (for the same $\varepsilon$). Let us
assume that there exists an open set $\Omega$ such that $\chi_{\Omega_n}$ (the sequence of corresponding characteristic functions) converges
in $L^1(D)$ to $\chi_\Omega$. Then
$$\mu_1(\Omega_n) \to \mu_1(\Omega).$$
\end{theorem}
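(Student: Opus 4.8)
The plan is to prove the convergence $\mu_1(\Omega_n) \to \mu_1(\Omega)$ by establishing the two usual halves separately: a lower semicontinuity estimate $\liminf_n \mu_1(\Omega_n) \ge \mu_1(\Omega)$ and an upper semicontinuity estimate $\limsup_n \mu_1(\Omega_n) \le \mu_1(\Omega)$, using the variational characterization \eqref{eVFN} together with the uniformly bounded extension operators $P_n$ provided by Lemma~\ref{lemuniproj}. Throughout I would work with the fixed ball $D$ containing all the $\Omega_n$ and with a fixed eigenfunction $u_n \in H^1(\Omega_n)$ for $\mu_1(\Omega_n)$, normalized by $\int_{\Omega_n} u_n\,dx = 0$ and $\int_{\Omega_n} u_n^2\,dx = 1$.

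For the lower bound, I would first note that $\mu_1(\Omega_n)$ is bounded: testing \eqref{eVFN} on $\Omega$ with any fixed admissible competitor restricted suitably, or more simply observing that the sequence is bounded because $\chi_{\Omega_n} \to \chi_\Omega$ forces $|\Omega_n|$ bounded below and the Weinberger inequality \eqref{Wi} gives a uniform upper bound, so $\int_{\Omega_n}|\nabla u_n|^2\,dx = \mu_1(\Omega_n)$ is bounded. Then the extensions $v_n := P_n u_n$ satisfy $\|v_n\|_{H^1(D)} \le M\|u_n\|_{H^1(\Omega_n)} \le M\sqrt{1+\mu_1(\Omega_n)}$, hence $(v_n)$ is bounded in $H^1(D)$; extract a subsequence with $v_n \rightharpoonup v$ weakly in $H^1(D)$ and strongly in $L^2(D)$. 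Since $\chi_{\Omega_n} \to \chi_\Omega$ in $L^1(D)$ and $v_n$ is $L^4$-bounded by Sobolev embedding (or simply by dominated convergence on the overlap), one checks $\int_\Omega v\,dx = \lim \int_{\Omega_n} u_n\,dx = 0$ and $\int_\Omega v^2\,dx = \lim \int_{\Omega_n} u_n^2\,dx = 1$, so $v$ is a legitimate nonzero test function for $\mu_1(\Omega)$. Finally, because $v_n = u_n$ a.e.\ on $\Omega_n$ and by weak lower semicontinuity of the Dirichlet energy combined with the convergence of the domains, $\int_\Omega |\nabla v|^2\,dx \le \liminf_n \int_{\Omega_n} |\nabla u_n|^2\,dx = \liminf_n \mu_1(\Omega_n)$, which yields $\mu_1(\Omega) \le \liminf_n \mu_1(\Omega_n)$.

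For the upper bound, I would start from an eigenfunction $w \in H^1(\Omega)$ for $\mu_1(\Omega)$ with $\int_\Omega w\,dx = 0$ and $\int_\Omega w^2\,dx = 1$. Using a uniform extension operator for $\Omega$ itself (note $\Omega$ inherits the uniform cone property as an $L^1$-limit, or one argues directly since $\Omega$ is open and one may first approximate $w$ by a smooth compactly supported function on a slightly smaller set and then extend trivially), produce $\tilde w \in H^1(D)$ agreeing with $w$ on $\Omega$, and restrict it to $\Omega_n$. The restriction $\tilde w|_{\Omega_n}$ is not quite mean-zero on $\Omega_n$, so I would subtract its average $c_n := |\Omega_n|^{-1}\int_{\Omega_n}\tilde w\,dx$; since $\chi_{\Omega_n}\to\chi_\Omega$ in $L^1$ we have $c_n \to 0$ and $\int_{\Omega_n}(\tilde w - c_n)^2\,dx \to \int_\Omega w^2\,dx = 1$, while $\int_{\Omega_n}|\nabla \tilde w|^2\,dx \to \int_\Omega |\nabla w|^2\,dx = \mu_1(\Omega)$. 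Plugging $\tilde w - c_n$ into \eqref{eVFN} on $\Omega_n$ gives $\mu_1(\Omega_n) \le \big(\int_{\Omega_n}|\nabla\tilde w|^2\big)\big/\big(\int_{\Omega_n}(\tilde w-c_n)^2\big) \to \mu_1(\Omega)$, hence $\limsup_n \mu_1(\Omega_n) \le \mu_1(\Omega)$. Combining the two inequalities and noting the limit is independent of the subsequence finishes the proof.

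The main obstacle I anticipate is the lower bound step, specifically verifying that the weak $H^1(D)$-limit $v$ restricts to a genuine, nonzero, mean-zero function on $\Omega$ with the correct energy inequality: one must be careful that the convergence $\chi_{\Omega_n} \to \chi_\Omega$ in $L^1$ alone does not give pointwise control of $v$ on $\partial\Omega$, so the identification $v|_{\Omega_n} = u_n$ has to be handled via the $L^2$-convergence of $v_n$ together with $|\Omega_n \triangle \Omega| \to 0$, and the energy inequality $\int_\Omega|\nabla v|^2 \le \liminf \int_{\Omega_n}|\nabla u_n|^2$ requires that for any fixed compact $K \Subset \Omega$ we have $K \subset \Omega_n$ for $n$ large (true when $\Omega_n \to \Omega$ in a strong enough sense, which here follows from the uniform cone property forcing Hausdorff convergence along a subsequence) so that $\int_K |\nabla v|^2 \le \liminf \int_K |\nabla u_n|^2 \le \liminf \mu_1(\Omega_n)$, and then let $K \uparrow \Omega$. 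A secondary technical point is ensuring the limit set $\Omega$ and the limiting extension behave well; this is where having the uniform $\varepsilon$-cone property for all $\Omega_n$ is essential, as it is exactly what Lemma~\ref{lemuniproj} needs and what prevents the kind of pathological oscillation of boundaries that breaks Neumann eigenvalue convergence in general.
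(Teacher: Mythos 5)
Your proof is correct and follows the same overall architecture as the paper's: a uniform bound on $\mu_1(\Omega_n)$ via Weinberger, Chenais's uniformly bounded extension operators giving weak $H^1(D)$ / strong $L^2(D)$ compactness of the extended eigenfunctions for the $\liminf$ half, and the restricted-and-recentred extension of a $\mu_1(\Omega)$-eigenfunction as a test function for the $\limsup$ half (this last step is the paper's argument essentially verbatim). The one place you genuinely diverge is how the $\liminf$ is concluded. The paper passes to the limit in the weak eigenvalue equation $\int_{\Omega_n}\nabla u_n\cdot\nabla v\,dx=\mu_1(\Omega_n)\int_{\Omega_n}u_n v\,dx$ for fixed $v\in H^1(D)$ (a strong-times-weak $L^2$ pairing) and deduces that $\mu=\lim\mu_1(\Omega_n)$ is itself a nontrivial Neumann eigenvalue of $\Omega$, hence $\ge\mu_1(\Omega)$. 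You instead insert the weak limit directly into the Rayleigh quotient \eqref{eVFN}, which requires the energy inequality $\int_\Omega|\nabla v|^2\,dx\le\liminf\mu_1(\Omega_n)$; you obtain it by exhausting $\Omega$ with compacts $K$ satisfying $K\subset\Omega_n$ for $n$ large and using weak lower semicontinuity on each $K$. Your route is slightly more economical (no need to identify the limit as an eigenvalue, and it sidesteps the implicit density question of testing the limit equation only against restrictions of $H^1(D)$-functions), but it leans on convergence in the sense of compacts, which indeed follows from the uniform cone property combined with the $L^1$-convergence of characteristic functions (a standard equivalence in \cite{HPb}); the paper's version avoids that by working only with functions defined on all of $D$. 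Both versions tacitly use that the limit set $\Omega$ inherits the cone property so that its own extension operator exists; you flag this point, the paper does not.
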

Actually, the convergence holds true for all eigenvalues, but here we only need convergence of $\mu_1$, thus we state it in that case.
\begin{proof}
Since $\chi_{\Omega_n} \to \chi_\Omega$, we have $|\Omega_n|\to |\Omega|$ and then, we can assume $|\Omega_n|\geq |\Omega|/2$.
Let us observe first that, according to Weinberger inequality \eqref{Wi}, the sequence $\mu_1(\Omega_n)$ is bounded by a constant $M_1=2\pi {j'_{11}}^2/|\Omega|$.
Thus, we can assume that, up to a subsequence that we still index by $n$, we have $\mu_1(\Omega_n)$ converges to some number $\mu$.
Let us denote by $u_n$ the normalized eigenfunction associated to $\mu_1(\Omega_n)$ i.e.  
$$\int_{\Omega_n} u_n^2 dx=1, \quad \int_{\Omega_n} |\nabla u_n|^2 dx= \mu_1(\Omega_n),\quad \int_{\Omega_n} u_n dx=0$$
and $\tilde{u}_n=P_n(u_n)$ its extension to $D$ through the extension operator $P_n$. 

From Lemma \ref{lemuniproj}, we get 
$$\|\tilde{u}_n\|_{H^1(D)} \leq M\|u_n\|_{H^1(\Omega_n)}=M\sqrt{1+\mu_1(\Omega_n)}\leq M\sqrt{1+M_1}.$$
Therefore, there exists a function $u_\infty\in H^1(D)$ and a subsequence, still denoted with the same index, such that $\tilde{u}_n$ converges strongly in $L^2(D)$
and weakly in $H^1(D)$ to $u_\infty$. Let us consider a fixed $v\in H^1(D)$, passing to the limit in the variational formulation 
$$\int_{\Omega_n} \nabla u_n.\nabla v dx=\int_D \chi_{\Omega_n} \nabla \tilde{u}_n.\nabla v dx = \mu_1(\Omega_n) \int_D \chi_{\Omega_n} \tilde{u}_n v dx$$
yields
$$\int_\Omega \nabla u_\infty.\nabla v dx = \mu \int_\Omega u_\infty v dx.$$
Moreover, passing to the limit in the same way provides the following identities
$$\int_\Omega u_\infty dx =0 \qquad \int_\Omega u_\infty^2 dx =1$$
this shows that $u_\infty$ is neither zero, nor constant. It follows that $\mu$ is a nontrivial Neumann eigenfunction of $\Omega$
and therefore
\begin{equation}\label{limi1}
\mu_1(\Omega) \leq \liminf \mu_1(\Omega_n).
\end{equation}
On the other hand, let $u$ be the normalized eigenfunction associated to $\mu_1(\Omega)$, $P(u)$ its extension to $D$ and let us consider $v_n$ the restriction
of $P(u)$ to $\Omega_n$ (which is a function in $H^1(\Omega_n)$). We denote by $M_n$ its mean value defined as $\frac{1}{|\Omega_n|} \int_{\Omega_n} v_n dx$.
We observe that we have the following convergences:
$$M_n \to \frac{1}{|\Omega|} \int_{\Omega} u dx =0 ,\quad \int_{\Omega_n} v_n^2 dx \to  \int_{\Omega} u^2 dx=1$$
and 
$$\int_{\Omega_n} |\nabla v_n|^2 dx \to  \int_{\Omega} |\nabla P(u)|^2 dx=\int_{\Omega} |\nabla u|^2 dx=\mu_1(\Omega).$$
Thus taking $v_n-M_n$ as a test function in the variational formulation of $\mu_1(\Omega_n)$ and passing to the limit provides
$$\limsup \mu_1(\Omega_n) \leq \limsup \dfrac{\int_{\Omega_n} |\nabla v_n|^2 dx}{\int_{\Omega_n}  (v_n - M_n)^2 dx} = \mu_1(\Omega)$$
together with \eqref{limi1} this provides the expected continuity.
\end{proof}
\subsection{The diagram $\mathcal{E}$}
We recall that the diagram we want to plot is
$$\mathcal{E}=\{(x,y) \mbox{ where } x=|\Omega| \lambda_1(\Omega),\; y=|\Omega| \mu_1(\Omega),\; \Omega\subset \R^2\}$$
where $\Omega$ is any bounded, open set suitably smooth (e.g. Lipschitz) in order that the Neumann spectrum is well defined.
Faber-Krahn \eqref{FKi} and Weinberger \eqref{Wi} inequalities imply that the upper left vertex of the diagram is the point 
$$A=(|B| \lambda_1(B),|B| \mu_1(B))=(\pi j_{0,1}^2,\pi {j'_{1,1}}^2).$$
We begin with the following lemma that is based on homogenization theory:
\begin{lemma}\label{lemhom}
Let $(x_0,y_0)\in \mathcal{E}$, then the half line $\{(x,y_0), x\geq x_0\}\subset \bar{\mathcal{E}}$, { where $\bar{\mathcal{E}}$ is the closure of $\mathcal{E}$}.  
\end{lemma}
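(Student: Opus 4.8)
The goal is to show that given one point $(x_0,y_0) = (|\Omega_0|\lambda_1(\Omega_0), |\Omega_0|\mu_1(\Omega_0))$ in the diagram, we can move horizontally to the right, i.e.\ keep $|\Omega|\mu_1(\Omega)$ fixed (in the limit) while letting $|\Omega|\lambda_1(\Omega)$ grow without bound. The natural mechanism is to drill many tiny holes into $\Omega_0$: removing a periodic array of small balls, with the right scaling of radius versus period, raises the Dirichlet eigenvalue arbitrarily (this is the classical Cioranescu--Murat "strange term" homogenization result), but is a gentle $L^1$ perturbation of the characteristic function, so by a continuity statement it barely affects $\mu_1$ and the area. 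Let me sketch the construction.

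First I would fix $\Omega_0$ with $(x_0,y_0) \in \mathcal{E}$, and for $\varepsilon > 0$ consider the perforated domain $\Omega_\varepsilon = \Omega_0 \setminus \bigcup_{i} \overline{B(x_i^\varepsilon, r_\varepsilon)}$, where the centers $x_i^\varepsilon$ form an $\varepsilon\mathbb{Z}^2$ lattice intersected with $\Omega_0$ (staying a fixed distance from $\partial\Omega_0$ so as not to disturb the boundary), and $r_\varepsilon$ is chosen in the critical regime $r_\varepsilon = C_\gamma \exp(-1/(\gamma\varepsilon^2))$ (the two-dimensional capacitary scaling) so that the homogenized limit problem is $-\Delta u + \gamma u = \lambda u$ on $\Omega_0$ with Dirichlet data. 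Two facts then drive the argument: (i) $|\Omega_\varepsilon| \to |\Omega_0|$ and $\chi_{\Omega_\varepsilon} \to \chi_{\Omega_0}$ in $L^1$, since the total volume of holes is $O(\varepsilon^{-2} \cdot \varepsilon^2 \cdot r_\varepsilon^2) = O(r_\varepsilon^2) \to 0$; (ii) $\lambda_1(\Omega_\varepsilon) \to \lambda_1^\gamma$, the first eigenvalue of $-\Delta + \gamma$ on $\Omega_0$, which can be made as large as we wish by taking $\gamma$ large — indeed $\lambda_1^\gamma = \lambda_1(\Omega_0) + \gamma \to +\infty$.

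The Neumann side is more delicate because the perforated domains do \emph{not} satisfy a uniform cone condition (the holes are cusp-free balls, but there are infinitely many of them accumulating), so Theorem~\ref{theneum} does not apply directly. Here the key point is that \emph{removing} material can only \emph{increase} the Neumann eigenvalue is false in general — monotonicity fails for Neumann — so I would instead argue via a two-sided test-function estimate. For the upper bound $\limsup_\varepsilon \mu_1(\Omega_\varepsilon) \le \mu_1(\Omega_0)$ one restricts the eigenfunction $u$ of $\mu_1(\Omega_0)$ to $\Omega_\varepsilon$, subtracts its mean, and checks the Rayleigh quotient converges (the removed balls carry vanishing energy and $L^2$ mass). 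For the lower bound one extends an eigenfunction of $\Omega_\varepsilon$; this needs a uniform $H^1(\Omega_0)$-bounded extension operator for perforated domains, which is exactly the classical result available in the periodic-perforation literature (extension operators for domains with periodically distributed holes, e.g.\ Cioranescu--Saint Jean Paulin), giving $\liminf_\varepsilon \mu_1(\Omega_\varepsilon) \ge \mu_1(\Omega_0)$. Hence $\mu_1(\Omega_\varepsilon) \to \mu_1(\Omega_0)$ and so $|\Omega_\varepsilon|\mu_1(\Omega_\varepsilon) \to y_0$.

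Putting it together: for each target abscissa $x_1 > x_0$, choose $\gamma$ so that $|\Omega_0|\lambda_1^\gamma > x_1$, then choose $\varepsilon$ small so that $\big| |\Omega_\varepsilon|\lambda_1(\Omega_\varepsilon) - |\Omega_0|\lambda_1^\gamma \big|$ and $\big| |\Omega_\varepsilon|\mu_1(\Omega_\varepsilon) - y_0 \big|$ are both tiny; this places a genuine diagram point $(|\Omega_\varepsilon|\lambda_1(\Omega_\varepsilon), |\Omega_\varepsilon|\mu_1(\Omega_\varepsilon)) \in \mathcal{E}$ arbitrarily close to $(x_1, y_0)$ (one can also let $\gamma$ vary continuously to sweep out, rather than just approximate, every abscissa above $x_0$, then close up). Since this works for every $x_1 \ge x_0$ and $\mathcal{E}$ is being closed anyway, the whole half-line $\{(x,y_0) : x \ge x_0\}$ lies in $\overline{\mathcal{E}}$. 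The main obstacle I anticipate is the Neumann continuity on perforated domains: one must cite or adapt a uniformly bounded extension operator for periodically perforated domains, since the uniform-cone framework of Lemma~\ref{lemuniproj} is not literally applicable, and one must make sure the holes are placed well inside $\Omega_0$ so that $\partial\Omega_0$ itself keeps whatever regularity is needed for the Neumann spectrum to behave.
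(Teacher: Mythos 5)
Your proposal is correct and follows essentially the same route as the paper: the critical Cioranescu--Murat perforation with $r_\eps=\exp(-C/\eps^2)$, the strange term shifting $\lambda_1$ by an arbitrary constant $\gamma$ while the area and $\mu_1$ are asymptotically unchanged. The only difference is that the paper simply cites Vanninathan's result on Neumann eigenvalues of periodically perforated domains for the convergence $\mu_1(\Omega_\eps)\to\mu_1(\Omega)$, whereas you sketch its proof via uniform extension operators --- correctly noting that the uniform-cone framework of Theorem \ref{theneum} does not apply here.
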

\begin{proof}
For the proof, we recall the construction of a sequence of perforated domains introduced by Cioranescu-Murat in \cite{CM97}
(we give the statement in two dimensions, but it is true in any dimension with a different normalization).
Let  $C_0>0$ be fixed. For every $\eps>0$, consider the ball $T_\eps= B_{r_\eps}(0)$ with a radius given by $r_\eps=\exp(-C_0/\eps^{2})$.
Now we consider the perforated domain
\begin{equation}\label{oe}
\Omega _\eps= \Omega  \setminus \cup_{z\in \mathbb Z^2}(2\eps z + \overline{T}_\eps)\,.
\end{equation}
Note that the removed holes form a periodic set in the plane, with period $2\eps$. Now it is proved in \cite{CM97} that the torsion functions on the
domains $\Omega_\eps$ (that is the solution of $-\Delta u=1$ in $\Omega_\eps$ and $u=0$ on the boundary) converge weakly in $H^1_0(\Omega )$ 
(and strongly in $L^2(\Omega)$)
to the solution $u^*$ of
$$
\left\{\begin{array}{lll}
-\Delta u^* +  \frac{\pi}{2 C_0} u^* = 1 \  &  \hbox{in }\Omega
\\
u^*\in H^1_0(\Omega )\,.\end{array}.
\right.
$$
As a consequence we have (see \cite[Theorem 2.3.2]{He06})
$$\lambda_1(\Omega_\eps) \to \lambda_1(\Omega) +  \frac{\pi}{2 C_0}$$
while a simple computation of the total area of the holes provides $|\Omega_\eps| \to  |\Omega|$. Therefore, $x(\Omega_\eps) \to x_0+ \frac{\pi |\Omega|}{2 C_0}$
and $C_0$ being arbitrary, we can attain any value of $x$ greater than $x_0$.

On the other hand, the behavior of the Neumann eigenvalues is simpler: it can be proved, see \cite{Va81} that $\mu_1(\Omega_\eps) \to \mu_1(\Omega)$,
therefore $y(\Omega_\eps) \to y_0$, this concludes the proof of Lemma \ref{lemhom}.
\end{proof} 
Following Lemma \ref{lemhom}, we need now to have a precise look on the left of the diagram. For that purpose, we prove:
\begin{lemma}\label{lemleft}
Let $\eta >0$ be given, then there is a path in $\mathcal{E}$ connecting the points $A=(|B| \lambda_1(B) ,|B| \mu_1(B))$ corresponding to the ball
with the point $C=(|B| \lambda_1(B), 0)$ that is completely contained in the rectangle $[|B| \lambda_1(B),  |B| \lambda_1(B) + \eta] \times [0, |B| \mu_1(B)]$.
\end{lemma}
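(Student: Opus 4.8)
\noindent The plan is to realise the path as the image, under the map $\Omega\mapsto\big(|\Omega|\lambda_1(\Omega),|\Omega|\mu_1(\Omega)\big)$, of a single continuous family of bounded Lipschitz domains $(\Omega_t)_{t\in[0,1]}$ that starts from a ball and, at the end, degenerates into a disconnected set. Fix a disk $B$ and put $\delta:=\eta/\lambda_1(B)$. I would build the family in four stages, choosing in advance small positive numbers $w_0,\ell_0,\rho_0$ with $w_0\ell_0+\pi\rho_0^2\le\delta$: first, for $t$ small, grow out of $\partial B$ a thin rectangular channel of fixed width $w_0$ whose length increases from $0$ to $\ell_0$; second, keeping this channel, let a disk $D_0$ of radius $\rho_0$ grow at its far end; third, let the width $w(t)$ of the channel decrease from $w_0$ to $0$, so that $\Omega_t$ converges (for the Hausdorff metric and in $L^1$) to $B\sqcup D_0$; fourth, on this disconnected set, let the radius of the small disk decrease to $0$. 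Thus $\Omega_0=B$, while $B\subseteq\Omega_t$ and $|\Omega_t|\le|B|+\delta$ for every $t$.

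\smallskip
\noindent Membership in the rectangle would then be automatic. Since $B\subseteq\Omega_t$, domain monotonicity of the first Dirichlet eigenvalue gives $\lambda_1(\Omega_t)\le\lambda_1(B)$, so that Faber--Krahn~\eqref{FKi} together with the area bound yields
$$|B|\lambda_1(B)\ \le\ |\Omega_t|\lambda_1(\Omega_t)\ \le\ (|B|+\delta)\lambda_1(B)\ =\ |B|\lambda_1(B)+\eta ,$$
while Weinberger~\eqref{Wi} gives $0\le|\Omega_t|\mu_1(\Omega_t)\le|B|\mu_1(B)$; hence $\gamma(t):=\big(|\Omega_t|\lambda_1(\Omega_t),|\Omega_t|\mu_1(\Omega_t)\big)$ stays in $[|B|\lambda_1(B),|B|\lambda_1(B)+\eta]\times[0,|B|\mu_1(B)]$. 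The endpoints come out right: $\gamma(0)=A$, and as $t\to1$ the appendage vanishes, so $|\Omega_t|\to|B|$, while the small disk being strictly smaller than $B$ forces $\lambda_1(B\sqcup D_t)=\lambda_1(B)$ and $\mu_1(B\sqcup D_t)=0$, whence $\gamma(t)\to(|B|\lambda_1(B),0)=C$. (The point $C$ is realised by no domain — equality in Faber--Krahn would force a ball, which is connected — so $\gamma$ lies in $\mathcal{E}$ on $[0,1)$ and reaches $C\in\overline{\mathcal{E}}$ only in the limit; this is exactly what is needed for the description of $\overline{\mathcal{E}}$.)

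\smallskip
\noindent The remaining point is the continuity of $\gamma$. The Dirichlet part is easy: $\Omega_t$ varies continuously for the Hausdorff metric and the complement of $\Omega_t$ has exactly one connected component throughout, so $t\mapsto\lambda_1(\Omega_t)$ is continuous by Theorem~\ref{thmsv}, including for the collapsing dumbbell of the third stage. For the Neumann part I would invoke Theorem~\ref{theneum}: on any parameter interval avoiding the instant where the channel pinches shut, the domains $\Omega_t$ are uniformly Lipschitz — the channel keeping a positive width, the small disk a positive radius, and all junction corners having angles bounded away from $0$ and $2\pi$ — so $t\mapsto\mu_1(\Omega_t)$ is continuous there; this applies also at $t=0$, since when the channel is born only its \emph{length} degenerates, not its width nor the corner angles, so the uniform Lipschitz bound survives and $\gamma(t)\to A$. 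At the pinching instant Theorem~\ref{theneum} is unavailable, and there I would argue by hand: inserting in the Rayleigh quotient~\eqref{eVFN} the usual dumbbell test function — a constant on $D_0$, zero on $B$, affine along the channel, then recentred to have zero mean — gives Dirichlet energy $O(w(t))$ and $L^2$ norm bounded below, so $\mu_1(\Omega_t)\to0=\mu_1(B\sqcup D_0)$. On the last, disconnected stage $\gamma$ is affine in $|D_t|$, hence continuous. Altogether one gets a continuous $\gamma:[0,1]\to\overline{\mathcal{E}}$ with $\gamma(0)=A$, $\gamma(1)=C$, $\gamma([0,1))\subset\mathcal{E}$, contained in the prescribed rectangle.

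\smallskip
\noindent The main obstacle I anticipate is exactly the pinching instant: the Neumann eigenvalue jumps under precisely the degeneration that is meant to drive it to $0$, so one cannot quote a continuity theorem there and must estimate $\mu_1$ directly. The mechanism that makes the construction work is that a thin channel together with a small second chamber already forces $\mu_1\to0$, whereas — $B$ remaining inside $\Omega_t$ with $|\Omega_t|\le|B|+\delta$ — the product $|\Omega_t|\lambda_1(\Omega_t)$ is automatically pinned within $\eta$ of $|B|\lambda_1(B)$; everything else is soft.
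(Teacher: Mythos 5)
Your proof is correct and rests on the same construction as the paper's: a Courant--Hilbert dumbbell (disc, thin channel, small disc), with domain monotonicity plus Faber--Krahn \eqref{FKi} and Weinberger \eqref{Wi} pinning the path inside the rectangle, Sverak's Theorem \ref{thmsv} and Theorem \ref{theneum} for continuity away from the degeneration, and an explicit zero-mean test function in \eqref{eVFN} to force $\mu_1\to 0$. The one genuine difference is the scheduling of the limits. The paper keeps the domain connected throughout: it shrinks the whole appendage at once, with channel width $\eps^6$ much thinner than the disc radius $\eps$, so the path tends to $C$ directly and never touches the line $y=0$. You instead pinch the channel at fixed appendage size, land on the bottom edge at an interior point via a genuinely disconnected domain, and then slide along $\{y=0\}$ toward $C$ by shrinking the second component. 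Your variant requires disconnected sets to be admissible in $\mathcal{O}$ (they are: the paper's convention is $\mu_1(\Omega)=0$ iff $\Omega$ is disconnected) and requires the hand estimate of $\mu_1$ exactly at the pinch, which you supply correctly; the paper's single-parameter shrinkage never crosses the pinching discontinuity but needs the calibrated exponents to make $\mu_1(\Omega_\eps)\to0$ while the appendage vanishes. Both arguments reach $C$ only as a limit point, and both leave the ``continuous deformation preserving the uniform cone property'' steps at the same informal level, so your proposal is an acceptable reorganization of the paper's proof rather than a new argument.
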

\begin{proof}
The proof is based on an explicit construction together with a sequence of domains {\it \`a la} Courant-Hilbert whose Neumann eigenvalue goes to zero
while its Dirichlet eigenvalue goes to the eigenvalue of the ball.

Let us start with the unit disc $\mathbb{D}$ and, for a given (small) number $\eps>0$, let us denote by $\Omega_\eps$ the union of the unit disc
with the rectangle { $(0,1+2\varepsilon)\times (-\eps^6/2,+\eps^6/2)$} and the small disc of center $(1+2\eps,0)$ and radius $\eps$, see {Figure \ref{fig:CH}}.

\begin{figure}[!h]
 \begin{tikzpicture}
   \draw (0,0) node {$\times$};
   \draw (3,0) node {$\times$};
   \draw [black,thick,domain=3:357] plot ({2*cos(\x)}, {2*sin(\x)});
   \draw[-] (2,.1) -- (2.5,.1);
   \draw[<->] (0,0) -- (0,2);
   \draw[<->] (3,0) -- (3,.5);
    \draw[-] (2,-.1) -- (2.5,-.1);
\draw (0,1) node[right] {$1$}; 
\draw (3,.25) node[right] {$\eps$}; 
    \draw[->] (2.25,-.3) -- (2.25,-.1);
    \draw[->] (2.25,.3) -- (2.25,.1);
\draw (2.25,.3) node[above] {$\eps^6$}; 
\draw [black,thick,domain=193:360+167] plot ({3+.5*cos(\x)}, {.5*sin(\x)});
\draw[<->] (2,-.5) -- (2.5,-.5);
\draw (2.25,-.9) node[above] {$\eps$};
\end{tikzpicture}
\caption{The domain $\Omega_\eps$}
\label{fig:CH}
\end{figure}

The area of $\Omega_\eps$ is less than $\pi(1+\eps^2)+\eps^7$ while, by monotonicity of Dirichlet eigenvalues with respect to inclusion
$\lambda_1(\Omega_\eps) \leq \lambda_1(\mathbb{D})$. Therefore, if we choose $\eps$ such that $\eps^2 \leq \eta/(2\pi \lambda_1(\mathbb{D}))$
we ensure $x(\Omega_\eps)\leq |B| \lambda_1(B) + \eta$.

Now, in a first step, we continuously deform the unit disc up to arriving on $\Omega_\eps$. We can assume that the sequence we make is increasing
(for inclusion) and we can also do it by preserving the {\it uniform $\varepsilon$-cone property} (see Definition \ref{deficone}) ensuring continuity of the 
Neumann eigenvalue and of the Dirichlet eigenvalue.  This continuous deformation makes a first continuous path
between the point $A$ and the point $A_\eps=(x(\Omega_\eps),y(\Omega_\eps))$ and by monotonicity, this path stays inside the rectangle 
$[|B| \lambda_1(B),  |B| \lambda_1(B) + \eta] \times [0, |B| \mu_1(B)]$.

In a second step, we make $\eps$ goes to zero. By Sverak's theorem \ref{thmsv}, we infer $\lambda_1(\Omega_\eps) \to \lambda_1(\mathbb{D})$ continuously
(and the same for the areas), so $x(\Omega_\eps) \to \pi \lambda_1(\mathbb{D})$ continuously. The Neumann eigenvalue $\mu_1(\Omega_\eps)$ also varies
continuously, according to  Theorem \ref{theneum} since for each fixed $\eps$, the family of domains satisfy a uniform cone condition in a neighborhood of 
$\Omega_\eps$. Now, let us consider, as a test function, the following continuous  function $v$ defined on $\Omega_\eps$ and depending only on the variable $x$:
$$v_\eps(x)= \left\lbrace\begin{array}{cc}
-1 & \mbox{in $\mathbb{D}$}\\
a_\eps (x-1) -1 & \mbox{in the channel}\\
\eps a_\eps -1 & \mbox{in the small disc of radius $\eps$}
\end{array}\right.$$
%
%
where we choose $a_\eps$ in order to have $\int_{\Omega_\eps} v_\eps dX =0$. A simple computation provides $a_\eps\sim 1/\eps^3$
when $\eps \to 0$. Moreover, 
$$\int_{\Omega_\eps} v_\eps^2(X) dX \geq \int_{\mathbb{D}} v_\eps^2(X) dx =\pi,$$
therefore, the variational characterization \eqref{eVFN} of $\mu_1$ yields
$$\mu_1(\Omega_\eps)\leq \frac{\eps^7 a_\eps^2}{\pi} \to 0 \quad \mbox{when $\eps$ goes to 0}.$$
This finishes the proof.
\end{proof}
From Lemmas \ref{lemhom} and \ref{lemleft}, we infer:
\begin{theorem}\label{theogenE}
The closure of the diagram $\mathcal{E}$ defined by
$$\mathcal{E}=\{(x,y) \mbox{ where } x=|\Omega| \lambda_1(\Omega),\; y=|\Omega| \mu_1(\Omega),\; \Omega\subset \R^2\}$$
is the strip $[|B|\lambda_1(B),+\infty)\times [0,|B| \mu_1(B)]$ (where $B$ is any disc).
\end{theorem}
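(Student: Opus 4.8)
The plan is to combine the two lemmas already proved, together with a simple monotonicity observation, to identify $\bar{\mathcal E}$ with the strip $S := [|B|\lambda_1(B),+\infty)\times[0,|B|\mu_1(B)]$. The inclusion $\bar{\mathcal E}\subset S$ is immediate: by the Faber--Krahn inequality \eqref{FKi} every point of $\mathcal E$ has $x$-coordinate at least $|B|\lambda_1(B)$, and by the Weinberger inequality \eqref{Wi} every point has $y$-coordinate in $[0,|B|\mu_1(B)]$ (the lower bound $y\ge 0$ being trivial since $\mu_1\ge 0$); since $S$ is closed, $\bar{\mathcal E}\subset S$.

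For the reverse inclusion $S\subset\bar{\mathcal E}$, I would argue as follows. Fix an arbitrary point $(x,y)\in S$, so $x\ge |B|\lambda_1(B)$ and $0\le y\le |B|\mu_1(B)$. By Lemma \ref{lemleft}, for every $\eta>0$ there is a path in $\mathcal E$ from the vertex $A=(|B|\lambda_1(B),|B|\mu_1(B))$ to $C=(|B|\lambda_1(B),0)$ contained in the rectangle $[|B|\lambda_1(B),|B|\lambda_1(B)+\eta]\times[0,|B|\mu_1(B)]$. By continuity (intermediate value theorem applied to the $y$-coordinate along this path), this path meets the horizontal level $\{y'=y\}$ at some point $(x_0,y)$ with $|B|\lambda_1(B)\le x_0\le |B|\lambda_1(B)+\eta$; in particular $(x_0,y)\in\mathcal E$. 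Now apply Lemma \ref{lemhom} with $(x_0,y_0)=(x_0,y)$: the entire half-line $\{(t,y):t\ge x_0\}$ is contained in $\bar{\mathcal E}$. Since $x\ge |B|\lambda_1(B)$, choosing $\eta$ small enough (e.g. $\eta = \max(x - |B|\lambda_1(B),\,1)$, or simply noting that for any $\eta>0$ we get $x_0\le |B|\lambda_1(B)+\eta$ and we only need $x_0\le x$ once $\eta$ is small, but more robustly: if $x>|B|\lambda_1(B)$ take $\eta < x-|B|\lambda_1(B)$ so $x_0\le x$ and hence $(x,y)$ lies on the half-line) we conclude $(x,y)\in\bar{\mathcal E}$. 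The boundary case $x=|B|\lambda_1(B)$ is handled directly: the points $(x_0,y)\in\mathcal E$ with $x_0\le |B|\lambda_1(B)+\eta$ for $\eta\to 0$ converge to $(|B|\lambda_1(B),y)$, which is therefore in $\bar{\mathcal E}$.

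Putting the two inclusions together gives $\bar{\mathcal E}=S$. I would also remark that the left edge $\{|B|\lambda_1(B)\}\times[0,|B|\mu_1(B)]$ and the top edge (attained only at $A$, by the equality cases of Faber--Krahn and Weinberger) show the diagram $\mathcal E$ itself is not closed in general, which is why the statement is phrased in terms of the closure; but this is a side comment, not part of the proof.

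I do not anticipate a real obstacle here: all the substantive work has been done in Lemmas \ref{lemhom} and \ref{lemleft} and in the classical inequalities \eqref{FKi} and \eqref{Wi}. The only point requiring a little care is the order of quantifiers in the second inclusion — one must first use Lemma \ref{lemleft} to reach a point with the correct $y$-value and $x$-value as close to the left edge as desired, and only then use the horizontal homogenization half-lines from Lemma \ref{lemhom} to sweep rightward to the prescribed $x$. Writing this cleanly, and disposing of the degenerate cases $y=0$, $y=|B|\mu_1(B)$, and $x=|B|\lambda_1(B)$, is the entire content of the argument.
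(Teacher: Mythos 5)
Your proposal is correct and follows essentially the same route as the paper, which simply states that Theorem \ref{theogenE} follows from Lemmas \ref{lemhom} and \ref{lemleft} (together with the inequalities \eqref{FKi} and \eqref{Wi}) without spelling out the details; your writeup merely makes explicit the intermediate-value argument along the path of Lemma \ref{lemleft} followed by the horizontal half-lines of Lemma \ref{lemhom}. The only blemish is the stray ``$\eta=\max(x-|B|\lambda_1(B),1)$'' (clearly a slip for a small choice of $\eta$), which you immediately correct, so nothing substantive is missing.
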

\begin{remark}\label{rk2.8} It is not clear to see whether the set $\mathcal{E}$ is closed in the general case, see Theorem \ref{thm3.1} below for the convex case. 
In the case of purely Dirichlet eigenvalues (the diagram $(\lambda_1(\Omega),
\lambda_2(\Omega))$  in \cite{BBFi} the authors can prove the closeness of the diagram using arguments of weak $\gamma$-convergence that are not true
for Neumann eigenvalues. The difficulty comes mainly from the weird behavior of Neumann eigenvalues with respect to set variations.
\end{remark}
\begin{remark}
For sake of simplicity, we have stated all the previous results for plane domains. It is straightforward to extend these results in any dimension.
In particular Theorem \ref{theogenE} remains true in any dimension. We leave the details to the reader.
\end{remark}
\section{The diagram for the convex case}\label{secconv}
\subsection{Qualitative properties}
As we will see, the diagram $\mathcal{E}^C$ corresponding to plane convex subsets cannot be found explicitly, but we are going to give
some qualitative properties and bounds allowing to have a more precise idea of this diagram. We will also give some numerical experiments.  We
define $\mathcal{K}$ to be the class of open bounded convex sets of $\mathbb{R}^2$ with non-empty interior.

Let us start with a topological property (difficult to prove without the convexity assumption as mentioned in Remark \ref{rk2.8}).
\begin{theorem}\label{thm3.1}
The set $\mathcal{E}^C$ is closed.\\
In particular, the shape optimization problems 
$$\min\{\mu_1(\Omega), \Omega {\in \mathcal{K}}, \lambda_1(\Omega)=x_0, |\Omega|=A_0\}$$
{and}
$$\max\{\mu_1(\Omega), \Omega  {\in \mathcal{K}}, \lambda_1(\Omega)=x_0, |\Omega|=A_0\}$$
have a solution.
\end{theorem}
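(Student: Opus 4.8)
The plan is to prove that $\mathcal{E}^C$ is \emph{sequentially} closed, exploiting that both coordinates $|\Omega|\lambda_1(\Omega)$ and $|\Omega|\mu_1(\Omega)$ are invariant under dilations. So let $(x_n,y_n)\in\mathcal{E}^C$ with $(x_n,y_n)\to(x_\infty,y_\infty)$, choose $\Omega_n\in\mathcal{K}$ with $x_n=|\Omega_n|\lambda_1(\Omega_n)$ and $y_n=|\Omega_n|\mu_1(\Omega_n)$, and, after replacing $\Omega_n$ by the dilate $|\Omega_n|^{-1/2}\Omega_n$, assume $|\Omega_n|=1$, so that $\lambda_1(\Omega_n)=x_n$ and $\mu_1(\Omega_n)=y_n$. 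The first step is to extract a convergent subsequence of domains. Since $x_n$ is bounded, the diameters $d_n:=\mathrm{diam}(\Omega_n)$ are bounded: a planar convex set of unit area and large diameter is necessarily thin, its minimal width satisfying $w_n\lesssim 1/d_n$, as one sees from the John ellipse $E\subset\Omega_n\subset 2E$ (dilation about the centre of $E$): its semi-axes $a_n\ge b_n$ obey $d_n\le 4a_n$ and $\pi a_n b_n\le |\Omega_n|=1$, whence $b_n\lesssim 1/d_n$ and $w_n\le 4b_n$; then $\lambda_1(\Omega_n)\ge \pi^2/w_n^2\gtrsim d_n^{\,2}$ by domain monotonicity and the one-dimensional Poincaré inequality in the thin direction. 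Thus $d_n\le d_0$ for some $d_0$; after a translation all $\Omega_n$ lie in a fixed ball $D$, and by the Blaschke selection theorem a subsequence of $\overline{\Omega_n}$ converges in the Hausdorff metric to a compact convex set $K$. Put $\Omega:=\mathrm{int}(K)$; since $|\Omega_n|=1$ and area is continuous under Hausdorff convergence of convex bodies, $|\Omega|=1>0$, so $\Omega\in\mathcal{K}$.

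It remains to pass to the limit in the two eigenvalues. For the Dirichlet eigenvalue I would invoke Sverak's Theorem~\ref{thmsv}: the complement in $\R^2$ of a bounded open convex set is connected, so the number of connected components of $\R^2\setminus\Omega_n$ is bounded (by $1$), and since $\Omega_n\to\Omega$ in the Hausdorff metric this gives $\lambda_1(\Omega_n)\to\lambda_1(\Omega)$, i.e. $x_\infty=\lambda_1(\Omega)=|\Omega|\lambda_1(\Omega)$. For the Neumann eigenvalue I would apply Theorem~\ref{theneum}, and the key point is that the family $(\Omega_n)$ satisfies a uniform $\varepsilon$-cone property for one fixed $\varepsilon$. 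This follows from a uniform lower bound on the inradius: since $E\subset\Omega_n$ we have $\rho(\Omega_n)\ge b_n\ge 1/(2\pi a_n)\ge 1/(2\pi d_0)=:\rho_0>0$. Then, if $B(z_n,\rho_0)\subset\Omega_n$, at any boundary point $x$ the convex hull of $x$ and this disc lies in $\Omega_n$ and contains a cone of fixed half-aperture $\arcsin(\rho_0/d_0)$ with axis $z_n-x$; for $\varepsilon$ small (depending only on $\rho_0,d_0$) one can take the same direction $\xi_x$ for all vertices in $B(x,\varepsilon)\cap\overline{\Omega_n}$, which is exactly Definition~\ref{deficone}. Moreover $\chi_{\Omega_n}\to\chi_\Omega$ in $L^1(D)$, because Hausdorff convergence of convex bodies with a uniform inradius bound forces $|\Omega_n\,\triangle\,\Omega|\to 0$ (equivalently $\chi_{\Omega_n}\to\chi_\Omega$ a.e.\ and boundedly). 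Hence Theorem~\ref{theneum} yields $\mu_1(\Omega_n)\to\mu_1(\Omega)$, i.e. $y_\infty=|\Omega|\mu_1(\Omega)$. Therefore $(x_\infty,y_\infty)=(|\Omega|\lambda_1(\Omega),|\Omega|\mu_1(\Omega))\in\mathcal{E}^C$, which proves that $\mathcal{E}^C$ is closed.

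For the shape optimisation statement, fix $x_0$ and $A_0$ and set $F=\{y:(A_0x_0,y)\in\mathcal{E}^C\}$. By Weinberger's inequality~\eqref{Wi} one has $F\subset[0,|B|\mu_1(B)]$, and $F$ is closed, being the fibre of the closed set $\mathcal{E}^C$ over $x=A_0x_0$; hence $F$ is compact and (provided it is non-empty, which is the standing hypothesis on the constraints) attains its minimum and its maximum. Pulling back through the scaling $\Omega\mapsto \sqrt{A_0}\,|\Omega|^{-1/2}\,\Omega$, which maps a convex set realising $(A_0x_0,y)$ to a convex set of area $A_0$ with $\lambda_1=x_0$ and $\mu_1=y/A_0$, the extremisers of $F$ produce minimisers and maximisers of the two problems.

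The main obstacle is the passage to the limit for $\mu_1$: unlike $\lambda_1$, the first non-trivial Neumann eigenvalue is unstable under arbitrary Hausdorff or $L^1$ perturbations (cf.\ Remark~\ref{rk2.8}), so the argument really hinges on establishing the \emph{uniform} cone (equivalently, uniform Lipschitz) property for the whole sequence $(\Omega_n)$. This in turn is bought by the a priori geometric control — a uniform diameter bound coming from the bound on $\lambda_1$, and a uniform lower bound on the inradius coming from the fixed area — and both steps use convexity in an essential way; without it neither the diameter bound nor the uniform cone condition need hold along a general sequence realising a convergent sequence of diagram points.
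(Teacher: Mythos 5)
Your proof is correct and follows essentially the same route as the paper's: normalize by scaling, extract a Hausdorff-convergent subsequence via the Blaschke selection theorem, pass to the limit in $\lambda_1$ by Sverak's Theorem \ref{thmsv} and in $\mu_1$ via a uniform $\varepsilon$-cone property combined with Theorem \ref{theneum}, then deduce the existence of extremizers from closedness. The only notable difference is that you normalize the area and rule out degeneration by the direct estimate $\lambda_1(\Omega_n)\gtrsim d_n^2$ for unit-area convex sets, whereas the paper fixes the diameter and excludes a degenerate limit by forward-referencing the lower bound $xy>\pi^4/4$ of Theorem \ref{theobounds}; your variant is self-contained on this point (modulo a harmless constant slip, $b_n\ge 1/(4\pi a_n)$ rather than $1/(2\pi a_n)$).
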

\begin{proof}
Let $(x_n,y_n)$ be a sequence of points in $\mathcal{E}^C$ corresponding to a sequence of (bounded) convex open sets $\Omega_n$.
We assume that $(x_n,y_n)$ converges to some point $(x_0,y_0)$.
By scale invariance of the coordinates $x$ and $y$ (namely $x$ and $y$ are invariant by homothety: $x(t\Omega)=x(\Omega)$), 
we can assume without loss of generality, that the diameter of $\Omega_n$ is fixed, equal to 1.
Then the sequence $\Omega_n$ stays in a fixed ball of radius 2 and from the Blaschke selection theorem (or compactness property of the Hausdorff
convergence, see \cite[Theorem 2.2.25]{HPb}) we know that, up to a subsequence, $\Omega_n$ converges (for the Hausdorff metric or for characteristic functions)
to some convex open set $\Omega$. Moreover, we will see below in Theorem \ref{theobounds} that for all $n$,  $x_ny_n >\pi^4/4$, therefore $y_0\not= 0$ and $\Omega$
is not the empty set.  We immediately deduce the following convergences:
\begin{itemize}
\item $|\Omega_n| \to |\Omega|$ by convergence of the characteristic functions,
\item $\lambda_1(\Omega_n) \to \lambda_1(\Omega)$ by Sverak Theorem \ref{thmsv}.
\end{itemize}
For the Neumann eigenvalue, let us consider a fixed compact ball $B$ included into the limit domain $\Omega$. By a classical stability result,
see \cite[Proposition 2.2.17]{HPb}, we know that $B\subset \Omega_n$ for $n$ large enough. Therefore, following \cite[Theorem 2.4.4]{HPb}, we see that all
the convex sets $\Omega_n$ satisfy a uniform $\eps$-cone property (with the same $\eps$ depending only on the radius of $B$). Using Theorem \ref{theneum}
we conclude that $\mu_1(\Omega_n) \to \mu_1(\Omega)$. Therefore $(x_0,y_0)=(|\Omega| \lambda_1(\Omega),|\Omega| \mu_1(\Omega)) \in \mathcal{E}^C$
proving the closeness of this set.

The result on the existence of minimizers or maximizers follow immediately from the closeness (by taking minimizing or maximizing sequences).
\end{proof}

\begin{remark}
It is interesting to note that numerical simulations of Section \ref{S:numerics} suggest that the diagram $\mathcal{E}^C$ is simply connected and even horizontally and vertically convex (which is a stronger statement): indeed, one can conjecture that the diagram $\mathcal{E}^C$ is exactly the set of points located between the curves of two continuous and strictly decreasing functions. 
These curves are defined as the images of the solutions of the minimum and the maximum problem presented at Theorem \ref{thm3.1}. 
The reader should be aware that proving such properties can be very challenging, especially when dealing with diagrams in the class of convex sets, we refer for example to \cite[Conjecture 5]{AH11} for the couple $(\lambda_1(\Omega),\lambda_2(\Omega))$ (the Dirichlet eigenvalues) and \cite[Open problem 2]{LZ19} for the couple $(\lambda_1(\Omega),T(\Omega))$ (where $T(\Omega) $ is the torsion). A strategy based on some perturbation lemmas and judicious choices of continuous paths constructed via Mikowski sums has been introduced in \cite{FL21}. Unfortunately, proving such perturbation lemmas seems to be quite challenging for the involved functionals $\lambda_1$ and $\mu_1$. As for the case of open sets, we point out the Open Problem 3 stated in \cite{BBP19} and very recently solved in \cite{MR4251321}. 
\end{remark}

\medskip
Now we want to bound the diagram $\mathcal{E}^C$ by two hyperbola. To that purpose,
we introduce and study the following scaling invariant functional 
$$F(\Omega):= x(\Omega) y(\Omega) =  |\Omega|^2\lambda_1(\Omega)\mu_1(\Omega).$$
By Theorem \ref{theogenE}, we have for general open sets $\inf F(\Omega)=0$ and $\sup F(\Omega)=+\infty$. Now, for planar convex sets, we have:
\begin{theorem}\label{theobounds}
$$\forall \Omega {\in \mathcal{K}},\ \ \ \ \ \frac{\pi^4}{4}< F(\Omega) < 9\pi^2j_{01}^2  $$
\end{theorem}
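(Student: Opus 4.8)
The plan is to establish the two bounds separately, each by combining a known sharp spectral inequality for convex sets with an elementary geometric estimate. For the \emph{lower bound} $F(\Omega) > \pi^4/4$, I would exploit the fact that on a convex set both $\lambda_1$ and $\mu_1$ can be controlled from below in terms of the diameter $D=\mathrm{diam}(\Omega)$. Indeed, by the Hersch–Payne–Protter (or Kröger/Payne–Weinberger) type bound one has $\mu_1(\Omega) \geq \pi^2/D^2$ for convex $\Omega$, and by a one-dimensional projection argument (or directly the classical bound $\lambda_1(\Omega)\geq \pi^2/D^2$ which actually follows from $\lambda_1 \geq \lambda_1(\text{slab of width }D)$ being false in general — so instead) I would use $\lambda_1(\Omega) \geq \lambda_1(R)$ where $R$ is the smallest strip containing $\Omega$, giving $\lambda_1(\Omega) \geq \pi^2/w^2$ with $w$ the width. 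Combining $\mu_1 \geq \pi^2/D^2$ with a lower bound for $\lambda_1$ and the trivial area bound $|\Omega| \geq$ (something in terms of $w$ and $D$) should produce $|\Omega|^2 \lambda_1 \mu_1 > \pi^4/4$. The cleanest route: use $|\Omega| \leq D w$ is the wrong direction; rather use that for convex sets $|\Omega| \geq \tfrac12 D \cdot h$ is also not quite it. The honest approach is to use the sharp Payne–Weinberger inequality $\mu_1(\Omega)\geq \pi^2/D^2$ together with $\lambda_1(\Omega) |\Omega| \geq$ a bound coming from inradius, and the fact that $|\Omega|\lambda_1(\Omega) \cdot |\Omega|\mu_1(\Omega) \geq \pi^2 |\Omega|/D^2 \cdot |\Omega|\lambda_1(\Omega)$; since $|\Omega| \geq \rho D$ for some universal-in-convexity relation between area, inradius $\rho$ and diameter, and $\lambda_1(\Omega) \geq j_{0,1}^2/\rho^2$ fails... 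Let me restate: I expect the lower bound to follow from $\mu_1(\Omega) \geq \pi^2/D(\Omega)^2$ (Payne–Weinberger, sharp on thin domains) combined with the one-dimensional lower bound $\lambda_1(\Omega) \geq \pi^2 / (4\rho(\Omega)^2)$ where $\rho$ is the inradius (since $\Omega$ contains no segment... this too needs care), and the convex-geometry inequality $|\Omega| \geq 2\rho(\Omega) \cdot (\text{width-type quantity})$; the product then telescopes to $\pi^4/4$ with strict inequality because equality would force $\Omega$ to be simultaneously a thin slab and a small disc.

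For the \emph{upper bound} $F(\Omega) < 9\pi^2 j_{0,1}^2$, I would use the Weinberger-type inequality for $\mu_1$ which on convex sets gives $\mu_1(\Omega) \leq c/\rho(\Omega)^2$ or better $|\Omega|\mu_1(\Omega) \leq |B|\mu_1(B)$ from \eqref{Wi} is too weak by itself since it caps $y$ but not the product growth in $x$ — however on convex sets $x(\Omega)=|\Omega|\lambda_1(\Omega)$ is itself bounded! Indeed the key fact is that for planar convex sets $|\Omega|\lambda_1(\Omega) \leq 9\pi^2 j_{0,1}^2/(\pi j_{1,1}'^2)\cdot(\dots)$ — more precisely I would invoke the Pólya-type or the Hersch bound: for convex $\Omega$, $\lambda_1(\Omega) \leq \pi^2/(4\rho^2) \cdot$(const) and $|\Omega| \leq$ const$\cdot \rho \cdot D$, while $\mu_1$ is controlled by Szegő–Weinberger. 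So the plan is: bound $x(\Omega) = |\Omega|\lambda_1(\Omega)$ using $|\Omega| \leq D\rho \cdot(\text{const})$ and $\lambda_1(\Omega)\leq (\text{const})/\rho^2$, hence $x \leq (\text{const}) D/\rho$; and separately bound $y(\Omega) = |\Omega|\mu_1(\Omega) \leq \pi j_{1,1}'^2$ by Weinberger \eqref{Wi}. But $D/\rho$ is unbounded for thin convex sets, so this cannot be how the \emph{product} stays bounded — instead, the product $xy$ must be handled jointly: when $\Omega$ is thin, $y \to 0$ like $1/(D/\rho)$, exactly compensating. So I would prove $x(\Omega)\cdot y(\Omega) \leq (\text{const})$ by writing $xy = |\Omega|^2 \lambda_1\mu_1$ and bounding $\mu_1(\Omega) \leq \pi^2/\rho^2 \cdot(\text{const})$... no: the right tool is the sharp inequality $\lambda_1(\Omega)\mu_1(\Omega) \leq$ (something)$/\rho^4$ won't close either.

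Given these difficulties, the \textbf{main obstacle} is clearly obtaining the upper bound with the specific constant $9\pi^2 j_{0,1}^2$; I expect the actual argument to use the inradius $\rho(\Omega)$ as the pivotal quantity: one has $\lambda_1(\Omega) < j_{0,1}^2/\rho(\Omega)^2 + (\text{correction})$ is wrong ($\lambda_1$ of the inscribed disc gives only a lower bound for $\lambda_1(\Omega)$ via monotonicity, i.e. $\lambda_1(\Omega) \leq \lambda_1(\text{inscribed disc}) = j_{0,1}^2/\rho^2$), so $\lambda_1(\Omega) \leq j_{0,1}^2/\rho(\Omega)^2$; next, for convex planar sets $|\Omega| \leq 2 D(\Omega)\rho(\Omega)$ (a standard convex-geometry inequality, since $\Omega$ lies in a $2\rho \times D$ rectangle up to the known refinement), giving $|\Omega|\lambda_1(\Omega) \leq 2 j_{0,1}^2 D/\rho$; and finally the sharp bound $\mu_1(\Omega) \leq \pi^2/D(\Omega)^2 \cdot(\text{const})$ — no, $\mu_1$ on a thin set behaves like $\pi^2/D^2$ from \emph{above} too by a slab comparison, so $\mu_1(\Omega) \leq C\pi^2/D(\Omega)^2$ and $|\Omega|\mu_1(\Omega) \leq C\pi^2 \cdot 2D\rho/D^2 = 2C\pi^2 \rho/D$. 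Multiplying: $x y \leq 4C\pi^2 j_{0,1}^2$, and chasing the optimal constants $C$ in the width/diameter estimates should give exactly $9\pi^2 j_{0,1}^2$, with strictness because the two extremal regimes (disc for the $\lambda_1$ bound, infinite slab for the $\mu_1$ bound) are mutually exclusive. I would carry out: (1) state and cite the convex geometric inequality $|\Omega| \leq 2\rho D$ and the refined one needed for the constant $9$; (2) prove $\lambda_1(\Omega)\leq j_{0,1}^2/\rho^2$ by domain monotonicity against the inscribed disc; (3) prove $\mu_1(\Omega) \leq 9\pi^2/(4D^2)$ (or similar) for convex $\Omega$ via comparison with a suitable interval/rectangle test function, this being the step I expect to be most delicate in getting the constant right; (4) multiply and verify strictness of each inequality cannot be simultaneous; (5) assemble analogously the lower bound from Payne–Weinberger $\mu_1\geq \pi^2/D^2$ and the matching lower estimate for $\lambda_1|\Omega|$.
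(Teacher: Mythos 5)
Your plan for the lower bound is aligned with the paper's: both use Hersch's inequality $\lambda_1(\Omega)\ge \pi^2/(4r^2)$ and Payne--Weinberger $\mu_1(\Omega)>\pi^2/D^2$, reducing the claim to a purely geometric estimate on $A/(rD)$. But the ``width-type quantity'' you leave unspecified is precisely the content of that half of the proof: the paper invokes the Hern\'andez Cifre--Salinas inequality
$A\ge r\sqrt{d^2-4r^2}+r^2\bigl(\pi-2\arccos(2r/d)\bigr)$,
and then checks by a one-variable minimization that $\bigl(A/(rd)\bigr)^2\ge 1$, which is exactly what makes the product ``telescope'' to $\pi^4/4$. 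Without naming and using an inequality of this type (at minimum $A\ge rD$ for convex sets, which is not one of the textbook box inequalities $A\le wD$), the lower bound is not established.

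The genuine gap is in the upper bound. You correctly identify $\lambda_1(\Omega)\le j_{0,1}^2/r^2$ by monotonicity against the inscribed disc, but your route to close the argument --- a diameter bound $\mu_1(\Omega)\le C/D^2$ together with $A\le 2rD$, followed by ``chasing the optimal constants'' to land on $9\pi^2 j_{0,1}^2$ --- is not a proof and is not the mechanism that produces the constant $9$. You would need $\mu_1(\Omega)D^2\le 9\pi^2/4$ for every planar convex set and the geometric inequality $A\le 2rD$, neither of which you prove or reference, and it is not clear the former holds with that constant. The key ingredient you never identify is a \emph{width-based} bound on $\mu_1$: the paper proves $\mu_1(\Omega)\le \pi^2 w^2/|\Omega|^2$ (with $w$ the minimal width), by taking a one-dimensional $\cos$ test function supported on the minimal enclosing strip and translating the transition window so that the mean-zero constraint holds; equality occurs for rectangles. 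Combined with the purely geometric $w\le 3r$ (sharp for the equilateral triangle, which is where the $9$ comes from), the widths and areas cancel exactly:
$F(\Omega)\le A^2\cdot \dfrac{9 j_{0,1}^2}{w^2}\cdot \dfrac{\pi^2 w^2}{A^2}=9\pi^2 j_{0,1}^2$.
As written, your upper bound does not go through; you would need to either prove the width inequality above or supply a proof of a sharp diameter-based substitute.
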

\begin{proof}
{\bf Lower bound}\\
We denote by $A(\Omega)=A$ the area of a convex domain,  $r(\Omega)=r$ its inradius and $D(\Omega)=D$ its diameter.
For the lower bound, we have by Hersch inequality \cite{Her60}: 
$$\lambda_1(\Omega)\ge \frac{\pi^2}{4}\times \frac{1}{r^2},$$
and by Payne-Weinberger inequality \cite{PW60}
$$\mu_1(\Omega) > \frac{\pi^2}{D^2},$$
then, we deduce $F(\Omega)\geq \frac{\pi^4 A^2}{4r^2 D^2}$. Now, to estimate the geometric quantity in the right-hand side,
we use a result, by M. Hernandez-Cifre and G. Salinas \cite{Hernandez01}, see also \cite{DHP21} that can be written:
$$A\geq r\sqrt{d^2-4r^2}+r^2\left(\pi -2\arccos{\left(\frac{2r}{d}\right)}\right).$$
This implies
\begin{align*}
F(\Omega)> \frac{\pi^4}{4}\times \left(\frac{A}{d r}\right)^2 &\ge \frac{\pi^4}{4}\times \left(\frac{r\sqrt{d^2-4r^2}+r^2\left(\pi -2\arccos{\left(\frac{2r}{d}\right)}\right)}{d r}\right)^2 \\
&= \frac{\pi^4}{4}\times \left(\sqrt{1-\left(\frac{2 r}{d}\right)^2}+\frac{r}{d}\left(\pi -2\arccos{\left(\frac{2r}{d}\right)}\right)\right)^2 \\
&\ge \min\limits_{x\in \big(0,\frac{1}{4}\big]} \frac{\pi^4}{4}\times  \left(\sqrt{1-x^2}+\frac{x}{2}\,(\pi-2\arccos{(x)})\right)^2 = \frac{\pi^4}{4}.
\end{align*}

{\bf Upper bound}\\ 
For the upper bound we first give a recent result obtained in \cite{HLL21} and we give the proof by sake of completeness:
\begin{proposition}
Let $\Omega$ be any smooth and bounded open set in the plane. Let us denote by $A$ its area and $w$ its minimal width
(the minimal distance between two parallel lines enclosing $\Omega$). Then
\begin{equation}\label{HLLineq}
\mu_1(\Omega) \leq \frac{\pi^2 w^2}{A^2},
\end{equation}
with equality for rectangles.
\end{proposition}
\begin{proof}
Let us consider the minimal strip enclosing $\Omega$. Without loss of generality, we can take this strip as horizontal limited by the two lines
$y=0$ and $y=w$. Let us now consider the family of rectangles $(a,a+A/w)\times (0,w)$ of area $A=|\Omega|$. Let us denote by $L=A/w$
the length of the rectangle. Let us now consider
a potential test function defined as
$$v_a(x,y)=\left\lbrace\begin{array}{cc}
-1 & \mbox{if } x\leq a\\
\cos\left(\frac{\pi}{L}\,(x-a+L) \right) & \mbox{if } a<x<a+L \\
+1 & \mbox{if } x\geq a+L.\\
\end{array}\right.$$
For negative values of $a$ with sufficiently large $|a|$, $\Omega$ is contained in the region $ x\geq a+L$ thus $\int_\Omega v_a(x,y) dX >0$, while, 
for sufficiently large and positive values of $a$,
$\int_\Omega v_a(x,y) dX <0$. Therefore, by continuity, there exists a value of $a$ for which $\int_\Omega v_a(x,y) dX =0$. We choose this value
and estimate $\mu_1(\Omega)$ from above thanks to this test function $v_a$.  Denoting by $R_a$ the rectangle $(a,a+L)\times (0,w)$ 
and using the fact that $v_a$ is constant outside $R_a$, we have on the one-hand
$$\int_\Omega |\nabla v_a|^2 dX=\int_{\Omega\cap R_a} |\nabla v_a|^2 dX \leq \int_{R_a} |\nabla v_a|^2 dX=\frac{\pi^2 w^2}{A^2}\,\frac{A}{2}.$$
On the other hand
\begin{align*}
\int_\Omega v_a^2 dX&=\int_{R_a} v_a^2 dX - \int_{R_a\setminus \Omega} v_a^2 dX + \int_{\Omega\setminus R_a} v_a^2 dX\\
                                      &\leq \int_{R_a} v_a^2 dX - |R_a\setminus \Omega| + |\Omega \setminus R_a| = \frac{A}{2}.
\end{align*}
where we used the fact that $|R_a\setminus \Omega| = |\Omega \setminus R_a|$ because $\Omega$ and $R_a$ have the same area. The estimate 
\eqref{HLLineq} follows.
\end{proof}

\medskip
We come back to the proof of the upper bound. We combine \eqref{HLLineq} with the inequality
$$\lambda_1(\Omega)\leq \frac{j_{01}^2}{r^2}\leq \frac{9j_{01}^2}{\omega^2},$$
the first inequality comes from the inclusion of the incircle into $\Omega$ and the second geometric inequality, saturated by the equilateral triangle,
can be found in the classical book of convex geometry \cite{YBb}.
Thus, we infer
$$F(\Omega)=|\Omega|^2\lambda_1(\Omega)\mu_1(\Omega)\leq \frac{\pi^2\omega^2\times 9j_{01}^2}{\omega^2}=9\pi^2j_{01}^2.$$
\end{proof}

The following inclusion is a direct corollary of Theorem \ref{theobounds}. 
\begin{corollary}
The set $\mathcal{E}^C$ is contained in the region $$\left\{(x,y)|\ x\ge |B|\lambda_1(B)\ \text{and} \ \frac{\pi^4}{4x}< y\leq  \min\left(|B|\mu_1(B),\frac{9\pi^2j_{01}^2}{x}\right) \right\},$$
where $B$ is any disc of $\mathbb{R}^2$.
\end{corollary}

\medskip
Let us finish this part by a remark on the behavior of the diagram near the point $A$ corresponding to discs.
\begin{proposition}
The diagram $\mathcal{E}^C$ has a vertical tangent at the point $A=(x(B),y(B))$.
\end{proposition}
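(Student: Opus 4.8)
The plan is to exhibit a one--parameter family of convex domains $\Omega_t$, $t\in(0,t_0)$, degenerating to a disc $B$ as $t\to 0^+$, whose images $P_t=(x(\Omega_t),y(\Omega_t))$ in the diagram converge to $A$ while the segment $[A,P_t]$ becomes vertical, i.e.
\[
\frac{x(\Omega_t)-x(B)}{y(B)-y(\Omega_t)}\longrightarrow 0\qquad(t\to 0^+).
\]
Since $\mathcal{E}^C$ lies in the half--plane $\{x\ge |B|\lambda_1(B)\}$ by Faber--Krahn and $A$ is its upper--left vertex, this is precisely the assertion that $\mathcal{E}^C$ has a vertical tangent at $A$. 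The natural candidate is the family of ellipses $\Omega_t=\{\,x^2/(1+t)^2+y^2/(1-t)^2<1\,\}$, so that $|\Omega_t|=\pi(1-t^2)$ and the linear map $(x,y)\mapsto\big(x/(1+t),\,y/(1-t)\big)=:(X,Y)$ sends $\Omega_t$ onto the unit disc $B$.

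For the Dirichlet eigenvalue the needed estimate is crude. Transporting the ground state $J_0(j_{0,1}\rho)$ of $B$ to $\Omega_t$ through this map gives a function vanishing on $\partial\Omega_t$, hence admissible for $\lambda_1(\Omega_t)$; being radial, its Rayleigh quotient on $\Omega_t$ equals $\tfrac{j_{0,1}^2}{2}\big((1+t)^{-2}+(1-t)^{-2}\big)$, which is \emph{even} in $t$. Therefore $\lambda_1(\Omega_t)\le j_{0,1}^2\big(1+O(t^2)\big)$, and together with $|\Omega_t|=\pi(1-t^2)$,
\[
x(\Omega_t)-x(B)=|\Omega_t|\lambda_1(\Omega_t)-\pi j_{0,1}^2\le C\,t^2 .
\]

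For the Neumann eigenvalue we must capture a term \emph{linear} in $t$, and this is where the multiplicity of $\mu_1(B)$ is used. Recall $\mu_1(B)=(j'_{1,1})^2$ is a double eigenvalue, with eigenfunctions $J_1(j'_{1,1}\rho)\cos\theta$ and $J_1(j'_{1,1}\rho)\sin\theta$. Transport the first one, $\phi(x,y)=J_1(j'_{1,1}\rho)\,X/\rho$ with $\rho=\sqrt{X^2+Y^2}$, to $\Omega_t$; being odd in $x$ it has zero mean on $\Omega_t$, hence is admissible in \eqref{eVFN}. A direct computation of its Rayleigh quotient on $\Omega_t$ gives
\[
\mu_1(\Omega_t)\le \frac{(j'_{1,1})^2}{2}\Big(\frac1{(1+t)^2}+\frac1{(1-t)^2}\Big)+\frac{\Delta}{2\,\|\phi\|_{L^2(B)}^2}\Big(\frac1{(1+t)^2}-\frac1{(1-t)^2}\Big),
\]
where $\Delta:=\int_B\big((\partial_x\phi)^2-(\partial_y\phi)^2\big)\,dx\,dy$. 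Writing $\phi=f(\rho)\cos\theta$ with $f=J_1(j'_{1,1}\,\cdot\,)$ one checks $\Delta=\tfrac{\pi}{2}\int_0^1\big(f'(\rho)+f(\rho)/\rho\big)^2\rho\,d\rho>0$ (equivalently: the chosen eigenfunction oscillates along the \emph{elongated} axis of $\Omega_t$, and therefore strictly lowers the energy). Since the second bracket equals $-4t+O(t^3)$ while the first is $2+O(t^2)$, this yields $\mu_1(\Omega_t)\le (j'_{1,1})^2-c\,t$ for some $c>0$ and all small $t$, hence
\[
y(B)-y(\Omega_t)=\pi (j'_{1,1})^2-|\Omega_t|\mu_1(\Omega_t)\ge c'\,t
\]
for some $c'>0$.

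Combining the two displays gives $\dfrac{x(\Omega_t)-x(B)}{y(B)-y(\Omega_t)}\le \dfrac{C}{c'}\,t\to 0$, which proves the statement. The main obstacle is the third step: one must recognise that, because $\mu_1(B)$ is a multiple eigenvalue, an anisotropic perturbation of the disc splits it with a contribution \emph{linear} in the perturbation amplitude, so that $\mathcal{E}^C$ descends from $A$ faster in $y$ than it advances in $x$, whereas for the \emph{simple} eigenvalue $\lambda_1$ only quadratic effects occur. Making this quantitative reduces to the elementary computation above, the only genuine point being the strict positivity of the anisotropy constant $\Delta$.
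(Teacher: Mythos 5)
Your proof is correct and follows essentially the same route as the paper: the paper also takes the family of ellipses with semi-axes $1+\varepsilon$ and $1$ and uses exactly the two estimates $x(\Omega_\varepsilon)-x(B)=O(\varepsilon^2)$ and $y(B)-y(\Omega_\varepsilon)\geq C\varepsilon$. The only difference is that the paper cites these estimates from the literature, whereas you derive them directly by transporting the Dirichlet ground state and one of the two $\mu_1$-eigenfunctions of the disc and verifying the strict positivity of the anisotropy integral $\Delta$, which makes the argument self-contained.
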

To see this property, it suffices to consider a sequence $\Omega_\varepsilon$ of convex domains, converging to the unit disk $\mathbb{D}$ and such that the ratio
$$\dfrac{\pi \mu_1(\mathbb{D})-|\Omega_\varepsilon|\mu_1(\Omega_\varepsilon)}{|\Omega_\varepsilon|\lambda_1(\Omega_\varepsilon) - \pi \lambda_1(\mathbb{D})} \to +\infty.$$
For that purpose, the sequence of ellipses, centered at the origin and with semi-axis of length $1+\varepsilon$ and $1$ is convenient.
Indeed, it is known, see for example { \cite[Section 7.2.6]{BdP17} and \cite[Remark 7.39]{BdP17}} that we have the following estimates
$$|\Omega_\varepsilon|\lambda_1(\Omega_\varepsilon) - \pi \lambda_1(\mathbb{D})\leq C_1 \varepsilon^2$$
and 
$$\pi \mu_1(\mathbb{D})-|\Omega_\varepsilon|\mu_1(\Omega_\varepsilon) \geq {C_2}{\varepsilon},$$
for some positive constants $C_1$ and $C_2$. The result follows.

\subsection{Some numerical experiments and conjectures}\label{S:numerics}

In this section, we use numerical experiments to obtain more information on the diagram $\mathcal{E}^C$ and state some interesting conjectures. 

We want to provide a numerical approximation of the diagram $\mathcal{E}^C$. To do so, a natural idea is to generate a large number of convex sets (more precisely polygons) for each we compute the first Dirichlet and Neumann eigenvalues via a classical finite element method. Nevertheless, the task of (properly) generating random convex polygons is quite challenging and interesting on its own. The main difficulty is that one wants to design an efficient and fast algorithm that allows to obtain a uniform distribution of the generated random convex polygons. For more clarification, let us discuss two different (naive) approaches:
\begin{itemize}
    \item one easy way to generate random convex polygons is by rejection sampling. We generate a random set of points in a square; if they form a convex polygon, we return it, otherwise we try again. Unfortunately, the probability of a set of $n$ points uniformly generated inside a given square to be in convex position is equal to $p_n = \left(\frac{\binom{2n-2}{n-1}}{n!}\right)^2$, see \cite{random_polygon}. Thus, the random variable $X_n$ corresponding to the expected number of iterations needed to obtain a convex distribution follows a geometric law of parameter $p_n$, which means that its expectation is given by $\mathbb{E}(X_n)=\frac{1}{p_n}=\left(\frac{n!}{\binom{2n-2}{n-1}}\right)^2 $. For example, if $N=20$, the expected number of iterations is approximately equal to $2.10^9$, and since one iteration is performed in an average of $0.7$ seconds, this means that the algorithm will need about $50$ years to provide one convex polygon with $20$ sides. 
    \item Another natural approach is to generate random points and take their convex hull. This method is quite fast, as one can compute the convex hull of $N$ points in a $\mathcal{O}(N\log(N))$ time (see \cite{MR475616} for example), but it is not quite relevant since it yields to a biased distribution.   
\end{itemize}

In order to avoid the issues stated above, we use an algorithm presented in \cite{sander}, that is based on the work of P. Valtr \cite{random_polygon}, where the author computes 
the probability of a set of $n$ points uniformly generated inside a given square to be in convex position and remarks (in Section 4) that the proof yields a fast and 
non-biased method to generate random convex sets inside a given square. We also refer to \cite{sander} for a nice description of the steps of the method and a beautiful 
animation where one can follow each step, one also will find an implementation of Valtr's algorithm in Java that we decided to translate in Matlab to use in the  \texttt{PDEtool}.
To obtain the different figures below, we generate $10^5$ random convex polygons of unit area and number of sides between $3$ and $30$ for which we compute the 
first Dirichlet and Neumann eigenvalues, by a classical finite element method. The eigenvalues computations were performed using Matlab's toolbox for solving partial 
differential equations \texttt{PDEtool} on a personal computer The average time needed to compute one eigenvalue is approximately equal to $0.75$ second. We then obtain a 
cloud of dots that provides an approximation of the diagram $\mathcal{E}^C$. This approach has been used in several works, we refer for example 
to \cite{AH11}, \cite{FtJMAA} and \cite{FL21}.


It is always interesting to have information on the boundaries of Blaschke--Santal\'o diagrams and the extremal domains (which correspond to points of the boundaries). For the present $(\lambda_1,\mu_1)$-diagram $\mathcal{E}^C$, numerical experiments suggest the following conjecture: 
\begin{conjecture}\label{conj:1}
\begin{itemize}
    \item Except for the ball, the domains laying on the upper boundary of $\mathcal{E}^C$ are polygonal (i.e. the solution of the problem $\max\{\mu_1(\Omega), \Omega \mbox{ convex }, \lambda_1(\Omega)=x_0, |\Omega|=1\}$, where $x_0>\lambda_1(B) $, is a polygon). 
    \item The regular polygons are located on the upper boundary of the diagram $\mathcal{E}^C$.
    \item We denote by $T_e$ the equilateral triangle of unit area. There exists $x_0>\lambda_1(T_e)$ such that: 
    \begin{itemize}
        \item if $\ell \in [\lambda_1(T_e),x_0)$, then the solution of the problem $\max\{\mu_1(\Omega), \Omega \mbox{ convex }, \lambda_1(\Omega)=\ell, |\Omega|=1\}$ is given by a superequilateral triangle (which is isosceles with aperture greater that $\pi/3$). 
        \item If $\ell \in (x_0,+\infty)$, then the solution of  the problem $\max\{\mu_1(\Omega), \Omega \mbox{ convex }, \lambda_1(\Omega)=\ell, |\Omega|=1\}$ is given by a rectangle. 
        \item The solution of the problem $\max\{\mu_1(\Omega), \Omega \mbox{ convex }, \lambda_1(\Omega)=x_0, |\Omega|=1\}$ is given by both a rectangle and a superequilateral triangle. 
    \end{itemize}
\end{itemize}
\end{conjecture}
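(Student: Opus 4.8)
We do not have a proof of Conjecture~\ref{conj:1}; here is the line of attack we would follow. All three items concern the upper boundary of $\mathcal{E}^C$, which by Theorem~\ref{thm3.1} is the image of the solutions of $\max\{\mu_1(\Omega):\Omega\in\mathcal K,\ \lambda_1(\Omega)=x_0,\ |\Omega|=1\}$, so the first step is to write down the first-order optimality conditions of this doubly constrained problem inside $\mathcal K$. Let $\Omega^{\ast}$ be an optimal shape and suppose, for contradiction, that $\partial\Omega^{\ast}$ contains a relatively open arc $\Gamma$ of positive curvature; on $\Gamma$ the convexity constraint is inactive, so normal deformations $V\cdot\nu$ of both signs supported in $\Gamma$ are admissible. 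Assuming for the moment that $\mu_1(\Omega^{\ast})$ is simple, the Hadamard formulas read
\begin{gather*}
\lambda_1'(\Omega^{\ast};V)=-\int_{\partial\Omega^{\ast}}|\partial_\nu u_\lambda|^2\,V\cdot\nu,\qquad |\Omega^{\ast}|'(\Omega^{\ast};V)=\int_{\partial\Omega^{\ast}}V\cdot\nu,\\
\mu_1'(\Omega^{\ast};V)=\int_{\partial\Omega^{\ast}}\big(|\nabla u_\mu|^2-\mu_1 u_\mu^2\big)\,V\cdot\nu,
\end{gather*}
and optimality forces the overdetermined relation $|\nabla u_\mu|^2-\mu_1 u_\mu^2=\alpha\,|\partial_\nu u_\lambda|^2+\beta$ on $\Gamma$ for suitable Lagrange multipliers $\alpha,\beta$. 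One then has to show that this relation, combined with the two eigenvalue PDEs and a unique-continuation / Serrin-type reflection argument along $\Gamma$, forces $\Gamma$ to be a segment, contradicting positive curvature. The verification that the overdetermined condition implies flatness is delicate, and the genuine obstacle is that $\mu_1(\Omega^{\ast})$ need not be simple: then $\mu_1'$ is only a one-sided directional derivative given by a minimum over the $\mu_1$-eigenspace, and the argument must be recast with the Clarke subdifferential, as is classical for maximization of higher eigenvalues.

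For the second item — the regular $n$-gons $P_n$ lie on the upper boundary — we would first show that each $P_n$ is a critical point of the problem above by exploiting its dihedral symmetry $D_n$. The Neumann eigenspaces split according to the irreducible representations of $D_n$, and the $\mu_1$-eigenspace carries, for $n\ge 3$, the two-dimensional "$k=1$" representation (exactly as the $\mu_1$-eigenfunctions of the disc are $r\cos\theta,r\sin\theta$), so $\mu_1(P_n)$ is double. Feeding this symmetry into the shape-derivative formulas shows that along any $D_n$-preserving deformation the first variations of $\mu_1$, of $\lambda_1$ and of the area are proportional with a fixed ratio — the criticality condition — while non-symmetric deformations are handled through the min-formula for the directional derivative. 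It then remains to certify that $P_n$ is a local \emph{maximum} and not a saddle; here the absence of closed-form spectral data for $n\ge 5$ forces one either to carry out a second-order (Hessian) computation accessible only through rigorous interval arithmetic, or to find a global comparison argument. Along the way one should also establish that the normalized map $n\mapsto(\lambda_1(P_n),\mu_1(P_n))$ traces a strictly decreasing curve from $T_e$ (at $n=3$) to $A$ (as $n\to\infty$), consistent with the predicted shape of the diagram.

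The third item is the most structured because the rectangle branch is explicit: for $R_a=(0,a)\times(0,1/a)$ with $a\ge 1$ one has $\lambda_1(R_a)=\pi^2(a^2+a^{-2})$ and $\mu_1(R_a)=\pi^2 a^{-2}$, a strictly decreasing arc in the $(x,y)$-plane, and inequality~\eqref{HLLineq}, saturated exactly by rectangles, shows this arc cannot be beaten from above by any convex set of the same minimal width. On the triangle side one parametrizes the superequilateral isosceles triangles by the aperture $\theta\in[\pi/3,\pi)$: the equilateral triangle ($\theta=\pi/3$) has explicit Dirichlet and Neumann spectra, and for general $\theta$ one controls $\lambda_1$ and $\mu_1$ by monotonicity and by bisecting the isosceles triangle into two congruent right triangles. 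The plan is to prove that near $\theta=\pi/3$ the triangle curve lies strictly above the rectangle curve while for large $x_0$ (thin triangles, long rectangles) the rectangle curve lies above, so by continuity and strict monotonicity the two curves cross at a single point, whose abscissa is the $x_0$ of the statement, and at $x_0$ both shapes are optimal. The serious gap here, exactly as in the first item, is to prove that on the range $\ell\ge\lambda_1(T_e)$ no other convex family beats both arcs — i.e. that the upper boundary really is the upper envelope of just these two explicit curves — together with sufficiently sharp two-sided estimates for $\lambda_1$ and $\mu_1$ of arbitrary isosceles triangles.

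In short, the two recurring obstacles are the non-simplicity of $\mu_1$ at every symmetric candidate (disc, regular polygons, equilateral triangle, square), which blocks a naive use of Hadamard's formula and requires working with one-sided derivatives and the Clarke subdifferential, and the lack of explicit eigenvalues for general polygons, which puts the second-order information needed to identify the extremizers essentially out of reach by hand; this is why the statement is offered as a conjecture rather than a theorem.
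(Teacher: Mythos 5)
There is nothing in the paper to compare your attempt against: the statement you were given is Conjecture~\ref{conj:1}, and the authors do not prove it. Their only support is numerical --- they generate $10^5$ random convex polygons of unit area by Valtr's algorithm, compute $\lambda_1$ and $\mu_1$ by finite elements, and read the conjectured structure of the upper boundary off the resulting cloud of points. You were therefore right to refuse to present your sketch as a proof; claiming one would have been the only real error available here.

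As a research programme your outline is sensible and correctly identifies the two standard obstructions (non-simplicity of $\mu_1$ at every symmetric candidate, and the absence of closed-form spectra for general polygons). Two caveats are worth recording if you ever try to carry it out. First, identifying the upper boundary of $\mathcal{E}^C$ with the image of the solutions of $\max\{\mu_1(\Omega):\Omega\in\mathcal K,\ \lambda_1(\Omega)=x_0,\ |\Omega|=1\}$ uses more than the existence result of Theorem~\ref{thm3.1}: it presupposes the vertical convexity of the diagram, which the authors themselves only conjecture in the remark following that theorem, so your first step already rests on an unproved structural hypothesis. Second, your observation that \eqref{HLLineq} is saturated by rectangles does not place the rectangle arc on the upper boundary, because the optimization is constrained by $\lambda_1$ and the area, not by the minimal width; a convex set with the same $\lambda_1$ as $R_a$ need not have width $1/a$, so the comparison you invoke does not close even the ``explicit'' branch of the third item. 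Both points are consistent with your own conclusion that the statement deserves its status as a conjecture.
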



Let us now focus on the inequalities of Theorem \ref{theobounds}. It is interesting to visualise those inequalities in the Blaschke--Santal\'o diagram that helps us to guess the optimal bounds of the scaling invariant functional $F(\Omega)=|\Omega|^2\lambda_1(\Omega)\mu_1(\Omega)$ introduced in Theorem \ref{theobounds}. It is clear that inequalities of the type $F(\Omega)\leq c_0$ (or $F(\Omega)\ge c_0$) can be read in the diagram as the curve of a hyperbola $x\longmapsto \frac{c_0}{x}$ that will delimit a region which contains $\mathcal{E}^C$.
\begin{itemize}
    \item The lowest hyperbola that we managed to draw above the diagram corresponds to the choice of $c_0=F(B)$ (where $B$ is a ball), which suggests that among bounded planar convex sets, $F$ is maximized by balls, this provides a sharp upper bound, see Conjecture \ref{conj:2} and Figure \ref{fig:conjecture}. 
    \item As for the lower bound,   
numerical evidence suggests that for thin domains, rhombi are located in the lower part of the diagram $\mathcal{E}^C$, this suggest that the infimum of $F$ is asymptotically attained by vanishing thin rhombi and seems to be given by $\pi^2 j_{0,1}^2$, indeed, if we denote by $R_d$ a rhombus of unit area and diameter equal to $d>0$, we have $\mu_1(R_d)\underset{d\rightarrow +\infty}{\sim} \frac{4 j_{0,1}^2}{d^2}$ and $\lambda_1(R_d) \underset{d\rightarrow +\infty}{\sim} \frac{\pi^2 d^2}{4}$ (see the case of equality in \cite[Theorem 1]{MR1486739} for the first equivalence and \cite{MR2558181} for the second one). 
\end{itemize}
\vspace{2mm}

We summarize the above discussion in the following conjecture:
\begin{conjecture}\label{conj:2}
$$\forall \Omega {\in \mathcal{K}},\ \ \ \ \ \pi^2 j_{0,1}^2 <  F(\Omega) \leq F(B)= \pi^2j_{0,1}^2 {j'}_{1,1}^2,  $$
where $B$ is any disk of $\mathbb{R}^2$. The upper bound is an equality only for balls and the lower bound is asymptotically reached by any family of thin vanishing rhombi. 
\end{conjecture}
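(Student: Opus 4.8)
We treat the two bounds separately; the upper one is the delicate part, because it asks the disk --- which \emph{minimizes} $x(\Omega)=|\Omega|\lambda_1(\Omega)$ by Faber--Krahn \eqref{FKi} and \emph{maximizes} $y(\Omega)=|\Omega|\mu_1(\Omega)$ by Weinberger \eqref{Wi} --- to \emph{maximize} the product $F=xy$. By Theorem~\ref{thm3.1} the set $\mathcal{E}^C$ is closed and, $F$ being scale invariant, a maximizer $\Omega^\star$ exists in $\mathcal{K}$, so the goal is to prove $\Omega^\star=B$. Near the disk, write $x(\Omega)=x_0+a$ and $y(\Omega)=y_0-b$ with $a,b\ge0$ and $(x_0,y_0)=A=(\pi j_{01}^2,\pi(j'_{11})^2)$; since $F(B)-F(\Omega)=x_0b-ay_0+ab$, a convenient sufficient condition for $F(\Omega)\le F(B)$ is the linear inequality
\begin{equation}\label{pp:lin}
(j'_{11})^2\,|\Omega|\lambda_1(\Omega)+j_{01}^2\,|\Omega|\mu_1(\Omega)\ \le\ 2\pi\,j_{01}^2\,(j'_{11})^2,
\end{equation}
which holds with equality at $B$. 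I would test \eqref{pp:lin} by a second order shape expansion at the disk: the first variation of $x$ vanishes there under the area constraint, as does that of $y$ along perturbations that do not split the double eigenvalue $\mu_1(B)$, so the question reduces to a comparison of two quadratic forms computable through Bessel functions; along the modes that do split $\mu_1(B)$ the drop of $y$ is only linear in the perturbation (the mechanism behind the vertical tangent of $\mathcal{E}^C$ at $A$) and \eqref{pp:lin} is then favourable. This computation is the decisive first test, and I do not expect its outcome to be obvious --- in particular the local maximality of the disk for $F$ is not clear a priori.

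Globally one cannot use \eqref{pp:lin}, whose left-hand side blows up along thin domains; it controls at best a neighbourhood $\{x(\Omega)\le x^\ast\}$ of the disk. For $x(\Omega)>x^\ast$ the set $\Omega$ is thin and the crude bound $F<9\pi^2 j_{01}^2$ of Theorem~\ref{theobounds} must be improved. Here I would use thin-domain asymptotics: rotating so that the longest axis is horizontal, write $\Omega=\{(s,t):0<s<L,\ g_-(s)<t<g_+(s)\}$ with $g_+$ concave, $g_-$ convex, concave width $h=g_+-g_-$ and $\int_0^L h\,ds=|\Omega|$; then $\lambda_1\sim\pi^2/w^2$ with $w=\max h$ (the Dirichlet ground state concentrates on the widest slice), while $\mu_1$ converges to the first nonzero eigenvalue of the weighted problem $-(hv')'=\mu\,h\,v$, which is of Bessel type. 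A uniform, quantitative version of these asymptotics, patched with quantitative refinements of Faber--Krahn and Weinberger on the remaining intermediate range, would close the upper bound.

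For the lower bound the plan is to upgrade the chain in the proof of Theorem~\ref{theobounds}. Hersch's bound $\lambda_1\ge\pi^2/(4r^2)$ is already asymptotically sharp on thin rhombi, but Payne--Weinberger $\mu_1>\pi^2/D^2$ loses there a factor $4j_{01}^2/\pi^2$, so it must be replaced by a convex-set inequality of the shape $|\Omega|^2\mu_1(\Omega)\ge c_0\,r(\Omega)^2$, with $c_0$ the value realized in the thin-rhombus limit; combined with Hersch this yields $F\ge\pi^2c_0/4$, strictly since equality in Hersch is never attained. The constant $c_0$ is prescribed by minimizing the first nonzero eigenvalue of $-(hv')'=\mu\,h\,v$ over concave profiles $h$ of given mass and given $\max h$, and the real work is this one-dimensional optimization --- in particular, checking that no admissible profile (e.g.\ the ramp profiles coming from thin triangles) lowers the eigenvalue below the symmetric-tent value.

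\textbf{Main obstacle.} In every part the same difficulty recurs: $\lambda_1$ and $\mu_1$ react to shape perturbations in opposite directions and with different orders of smallness, so the standard ``quantitative stability plus crude bound'' sandwich does not close on its own. The crux is to decide whether the disk (for the maximum) and the family of thin rhombi (for the minimum) are truly extremal for $F$ --- equivalently, to rule out competitors such as the near-polygonal convex sets of Conjecture~\ref{conj:1} --- and the second order shape computation attached to \eqref{pp:lin} is the single most informative step toward settling the conjecture.
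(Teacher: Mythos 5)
You are attempting to prove a statement that the paper itself only \emph{conjectures}: the authors' support for Conjecture~\ref{conj:2} consists of the numerical diagram, the asymptotics $\lambda_1(R_d)\sim \pi^2 d^2/4$, $\mu_1(R_d)\sim 4j_{0,1}^2/d^2$ for thin rhombi of unit area, and the existence of a maximizer of $F$ over $\mathcal{K}$ (obtained by showing that $\limsup F\le \pi^4<F(B)$ along any sequence with unbounded diameter). Your text is likewise a programme rather than a proof, and you say so honestly. Several ingredients are sound and consistent with what the paper actually proves: the observation that $(j'_{1,1})^2\,|\Omega|\lambda_1(\Omega)+j_{0,1}^2\,|\Omega|\mu_1(\Omega)\le 2\pi j_{0,1}^2 (j'_{1,1})^2$ implies $F(\Omega)\le F(B)$ by AM--GM is correct (with equality at the disk), the vertical-tangent mechanism you invoke is exactly the one the paper uses, and the reduction of thin convex sets to the weighted one-dimensional problem $-(hv')'=\mu hv$ is the right tool in the collapsing regime, matching the paper's own citation $\mu_1(R_d)\sim 4j_{0,1}^2/d^2$ for the tent profile.

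The genuine problem is that the ``decisive check'' you defer in the lower-bound part --- whether a ramp profile can beat the symmetric tent --- in fact fails, and in a way that undermines the conjectured constant itself. For the ramp $h(s)=1-s/L$ on $(0,L)$, the problem $-(hv')'=\mu hv$ is the radial Neumann problem on a disk of radius $L$, so its first nonzero eigenvalue is $j_{1,1}^2/L^2$ with $j_{1,1}\approx 3.832$ the first positive zero of $J_1$; since $j_{1,1}^2\approx 14.68 < 4j_{0,1}^2\approx 23.13$, the ramp is strictly better than the tent at equal length and equal mass. Concretely, let $T_d$ be the acute isosceles triangle of unit area with base $w=2/d$ and height $d$. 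Then $r(T_d)\sim w/2$, so $\lambda_1(T_d)\sim \pi^2/w^2=\pi^2 d^2/4$ (the same asymptotics the paper invokes for its existence proposition), while $\mu_1(T_d)\sim j_{1,1}^2/d^2$ (the sharp constant in Laugesen--Siudeja's optimal Poincar\'e inequality for triangles, attained precisely by degenerate acute isosceles triangles). Hence $F(T_d)\to \pi^2 j_{1,1}^2/4\approx 36.2$, which lies strictly \emph{below} the conjectured infimum $\pi^2 j_{0,1}^2\approx 57.1$ while remaining above the proven bound $\pi^4/4\approx 24.3$ of Theorem~\ref{theobounds}. Unless there is an error in this elementary asymptotic computation, thin rhombi are not extremal, the conjectured lower constant should be at most $\pi^2 j_{1,1}^2/4$ with collapsing acute isosceles triangles as the candidate extremal family, and any proof strategy calibrated to the value $\pi^2 j_{0,1}^2$ --- including the one-dimensional optimization you propose --- cannot close. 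Before investing in the second-order shape analysis at the disk, you should confront this computation with refined numerics on elongated triangles, which the paper's uniformly sampled random polygons would not have reached.
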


\begin{figure}[!h]
  \centering
    \includegraphics[scale=0.44]{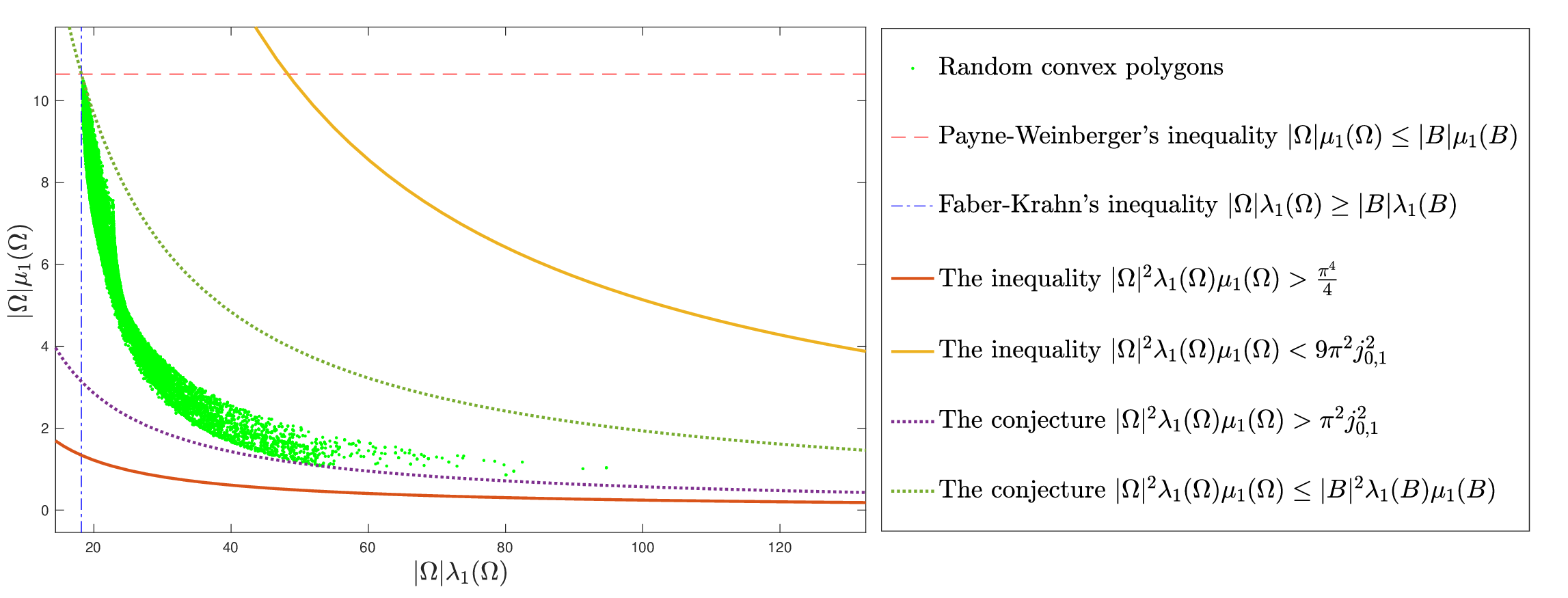}
    \caption{Graphical visualization of the inequalities of Theorem \ref{theobounds} and the conjectures of Conjecture \ref{conj:2}}
    \label{fig:conjecture}
\end{figure}  

The following existence result supports the claim of Conjecture \ref{conj:2}. 
\begin{proposition}
 There exists $\Omega^*\in \mathcal{K}$, such that: 
$$F(\Omega^*)=\max\limits_{\Omega\in \mathcal{K}} F(\Omega).$$
\end{proposition}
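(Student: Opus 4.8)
The plan is to prove existence of a maximizer for $F$ over $\mathcal{K}$ by the direct method, exploiting the closedness of the diagram $\mathcal{E}^C$ already established in Theorem \ref{thm3.1}. Since $F(\Omega) = x(\Omega) y(\Omega)$ is precisely the product of the two coordinates of the point representing $\Omega$ in the diagram, maximizing $F$ over $\mathcal{K}$ is the same as maximizing the continuous function $(x,y) \mapsto xy$ over the set $\mathcal{E}^C \subset \R^2$. So the whole question reduces to showing that this supremum is attained.

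First I would invoke Theorem \ref{theobounds}, which gives $F(\Omega) < 9\pi^2 j_{01}^2$ for every $\Omega \in \mathcal{K}$; hence $M := \sup_{\Omega \in \mathcal{K}} F(\Omega)$ is a finite positive real number (positivity is clear, e.g. from the value at the ball, or simply from the lower bound $\pi^4/4$). Next I would take a maximizing sequence $\Omega_n \in \mathcal{K}$ with $F(\Omega_n) \to M$, and set $(x_n,y_n) = (x(\Omega_n), y(\Omega_n)) \in \mathcal{E}^C$, so that $x_n y_n \to M$. From Theorem \ref{theobounds} again we have the two-sided control $\pi^4/4 < x_n y_n < 9\pi^2 j_{01}^2$ together with $x_n \geq |B|\lambda_1(B)$ and $0 < y_n \leq |B|\mu_1(B)$; combining the product bound with the lower bound on $y_n$'s companion one sees that $x_n$ stays bounded (indeed $x_n = (x_n y_n)/y_n$, and $y_n$ is bounded below because $x_n$ is bounded above by... ) — more carefully: $x_n \le x_n y_n / \big(\pi^4/(4 x_n)\big)$ is circular, so instead use that $x_n y_n < 9\pi^2 j_{01}^2$ forces $x_n$ bounded once $y_n$ is bounded below, and $y_n$ is bounded below by $\pi^4/(4 x_n)$; a cleaner route is to note the sequence $(x_n,y_n)$ lies in the compact set $\{(x,y): |B|\lambda_1(B) \le x, \ \pi^4/(4x) \le y \le |B|\mu_1(B)\} \cap \{xy \ge M/2\}$, which is bounded (the constraint $y \le |B|\mu_1(B)$ plus $xy$ bounded above by Theorem \ref{theobounds} bounds $x$) and closed. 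Hence, up to a subsequence, $(x_n,y_n) \to (x_0,y_0)$ with $x_0 y_0 = M$.

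Then I would apply Theorem \ref{thm3.1}: since $\mathcal{E}^C$ is closed and $(x_n,y_n) \in \mathcal{E}^C$ converges to $(x_0,y_0)$, we get $(x_0,y_0) \in \mathcal{E}^C$, i.e. there exists $\Omega^* \in \mathcal{K}$ with $x(\Omega^*) = x_0$, $y(\Omega^*) = y_0$, and therefore $F(\Omega^*) = x_0 y_0 = M = \max_{\Omega \in \mathcal{K}} F(\Omega)$. This completes the argument.

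The only genuinely delicate point is the compactness/boundedness of the maximizing sequence of points in the plane — but this is immediate from Theorem \ref{theobounds}, which simultaneously bounds $y$ from above by Weinberger and the product $xy$ from above by $9\pi^2 j_{01}^2$, so $x$ cannot escape to infinity along a maximizing sequence (if it did, $y$ would have to vanish and the product could not stay near $M>0$). All the real work — the closedness of $\mathcal{E}^C$, which relies on Blaschke selection, Sverak's theorem, and the uniform cone property for convex sets, and the two-sided bound on $F$ — has already been done in Theorems \ref{thm3.1} and \ref{theobounds}; the present statement is a short corollary assembling those two facts via the direct method.
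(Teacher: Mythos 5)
There is a genuine gap at the exact point you yourself flagged as ``the only genuinely delicate point'': the boundedness of the maximizing sequence in the $(x,y)$-plane. The set
$$\left\{(x,y):\ x\ge |B|\lambda_1(B),\ 0<y\le |B|\mu_1(B),\ \tfrac{\pi^4}{4}\le xy\le 9\pi^2 j_{01}^2,\ xy\ge M/2\right\}$$
is \emph{not} bounded: it contains the points $(x, M/x)$ for all sufficiently large $x$. Your parenthetical justification --- that $y\le |B|\mu_1(B)$ together with $xy\le 9\pi^2 j_{01}^2$ bounds $x$ --- is false (an upper bound on $y$ and an upper bound on $xy$ say nothing about $x$), and your closing remark that ``if $x$ escaped to infinity, $y$ would have to vanish and the product could not stay near $M$'' is also false: $x_n=n$, $y_n=M/n$ has $y_n\to 0$ and $x_n y_n\equiv M$. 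Since $\mathcal{E}^C$ is itself unbounded (thin rectangles give arbitrarily large $x$), closedness of $\mathcal{E}^C$ plus continuity of $(x,y)\mapsto xy$ does not yield attainment: a priori the supremum of $xy$ over $\mathcal{E}^C$ could be approached only along a sequence with $x_n\to+\infty$, $y_n\to 0$.

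This is precisely the scenario the paper's proof is designed to exclude, and it is where all the work lies. The paper assumes the diameters $D_k$ of a (unit-area) maximizing sequence tend to infinity, proves the geometric fact $w_k/r_k\to 2$ (via $w_k^2\le\sqrt{3}|\Omega_k|$ and Scott's inequality $\sqrt{3}(w/r-2)D\le 2w$), and combines the thin-domain asymptotics $\lambda_1(\Omega_k)\sim \pi^2/(4r_k^2)$ with the width bound $\mu_1(\Omega)\le \pi^2 w^2/A^2$ to conclude $\limsup_k F(\Omega_k)\le \pi^4$. Since $\pi^4<F(B)=\pi^2 j_{0,1}^2 {j'}_{1,1}^2$, such a sequence cannot be maximizing; only then does compactness (Blaschke selection) apply, followed by continuity of area and of both eigenvalues under Hausdorff convergence of convex sets. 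The crude upper bound $9\pi^2 j_{01}^2$ from Theorem \ref{theobounds} is strictly larger than $F(B)$ and therefore cannot substitute for this quantitative asymptotic analysis. Your reduction to maximizing $xy$ over the closed set $\mathcal{E}^C$ is a clean reformulation, but the missing ingredient is a bound of the form $\limsup_{x(\Omega)\to\infty} F(\Omega)<\sup F$, without which the direct method does not close.
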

\begin{proof}
The proof follows the classical method of calculus of variations. Let $(\Omega_k)$ be a maximizing sequence of elements of $\mathcal{K}$ of unit area (i.e. such that $|\Omega_k|=1$ for every $k\in \mathbb N$ and $\lim\limits_{k\rightarrow +\infty} F(\Omega_k) = \sup\limits_{\Omega\in \mathcal{K}} F(\Omega)$). 

Let us assume that the diameter of $\Omega_k$ denoted by $D_k$ is such that $D_k\underset{k\rightarrow +\infty}{\longrightarrow} +\infty$.
Let us prove in this case  a geometric property,  namely
\begin{equation}\label{geomp}
\lim\limits_{D_k\rightarrow +\infty}\frac{w_k}{r_k}=2.
\end{equation}
here $w_k$ is the minimal width of $\Omega_k$ and $r_k$ its inradius.
Using the classical inequality, see \cite{YBb}
$$w_k^2\leq \sqrt{3} |\Omega_k| = \sqrt{3},$$
we infer 
$w_k\leq 3^{1/4}$.
Now, from the inequality see \cite{Scott79})
$$\sqrt{3}(\frac{w}{r}-2)D \leq 2w.$$ 
we see that $w_k/r_k \to 2$ as soon as $D_k\to +\infty$ (because the right-hand side is bounded).
Coming back to the eigenvalues, we have: 
$$\lambda_1(\Omega_k)\underset{k\rightarrow +\infty}{\sim} \frac{\pi^2}{4}\times\frac{1}{r_k^2},$$
see the proof of \cite[Proposition 5.1]{FtJMAA}. Thus, by combining this equivalence with the inequality \eqref{HLLineq}, we obtain:
\begin{equation}\label{eq:absurde}
\limsup\limits_{k\rightarrow +\infty} F(\Omega_k)\leq \limsup\limits_{k\rightarrow +\infty} 
\frac{\pi^4}{4}\times \left(\frac{w_k}{r_k}\right)^2 =\pi^4 <\pi^2 {j'}_{1,1}^2j_{0,1}^2= F(B).    
\end{equation}
Thus, the result of \eqref{eq:absurde} is a contradiction with the assumption that $(\Omega_k)$ is a maximizing sequence, which proves that (up to translations) there exists a bounded box $D\subset \mathbb{R}^2$ that contains all the $\Omega_k$. Thus, by Blaschke selection theorem {(see for example \cite[Theorem 1.8.7]{schneider})}, there exists a convex set $\Omega^*$ such that $(\Omega_k)$ converges to $\Omega^*$ (up to a subsequence) for the Hausdorff distance. By continuity of the area and the Dirichlet and Neumann eigenvalues for the Hausdorff distance in convex sets, we deduce that: 
$$F(\Omega^*)=\lim\limits_{k\rightarrow +\infty} F(\Omega_k) = \sup_{\Omega\in \mathcal{K}}F(\Omega).$$
This ends the proof. 
\end{proof}

%

\medskip\noindent

{\bf Acknowledgements}: 
This work was partially supported by the project ANR-18-CE40-0013 SHAPO financed by the French Agence Nationale de la Recherche (ANR).

\bibliographystyle{abbrv}
\bibliography{biblio.bib}

\end{document}